\def \c{\mathbb{C}}
\def \z{\mathbb{Z}}
\def \r{\mathbb{R}}
\def \n{\mathbb{N}}
\def \p{\mathbb{P}}
\def \ol{\overline}
\def \L{\mathcal{L}}
\def \K{{\bf K}}
\def \S{\mathcal{S}}
\def \P{\mathcal{P}}
\def \V{\mathcal{V}}
\def \.{\cdot}
\def \Div{\textup{Div}}
\def \Pic{\textup{Pic}}
\def \Vol{\textup{Vol}}
\def \supp{\textup{supp}}
\def \ker{\textup{ker}}
\def \GL{\textup{GL}}
\def \Gr{\textup{Gr}}
\def \GL{\textup{GL}}
\def \SL{\textup{SL}}
\def \SO{\textup{SO}}
\def \SP{\textup{SP}}
\theoremstyle{plain}
\newtheorem{Th}{Theorem}[section]
\newtheorem{Prop}[Th]{Proposition}
\newtheorem{Cor}[Th]{Corollary}
\theoremstyle{definition}
\newtheorem{Ex}[Th]{Example}
\newtheorem{Def}[Th]{Definition}
\newtheorem{Rem}[Th]{Remark}
\begin{document}
\title{Newton polytopes for horospherical spaces}
\author{Kiumars Kaveh, A. G. Khovanskii}
\date{\today}

\maketitle


\begin{abstract}
A subgroup $H$ of a reductive group $G$ is horospherical if it contains a maximal
unipotent subgroup. We describe the Grothendieck semigroup of invariant subspaces of regular
functions on $G/H$ as a semigroup of convex polytopes.
From this we obtain a formula for the number of
solutions of a generic system of equations on $G/H$ in terms of mixed volume of polytopes.
This generalizes Bernstein-Kushnirenko theorem from toric geometry.\\
\end{abstract}

\noindent {\it Key words:} Reductive group,
moment polytope, horospherical variety, Bernstein-Kushnirenko theorem, Grothendieck group.\\

\noindent{\it AMS subject classification:} 14M17, 14M25.\\

\section{Introduction}
Consider a commutative semigroup $S$. Two elements $a, b \in S$ are {\it analogous}
and written $a \sim b$ if there is $c \in S$ with $a+c = b+c$ (where we write the semigroup operation additively).
This relation is an equivalence relation and respects the addition.
The {\it Grothendieck semigroup $\Gr(S)$ of $S$} is the set of equivalence classes of $\sim$
together with the induced addition. The map which sends an element to its equivalence class
is a natural homomorphism $\rho: S \to \Gr(S)$. The semigroup $\Gr(S)$ has the
{\it cancelation property}, i.e. if $a, b, c \in \Gr(S)$ the equality $a+ c = b+c$ implies $a = b$. Moreover,
for any homomorphism $\varphi: S \to H$ where $H$ is a semigroup with cancelation, there exists a
unique homomorphism $\bar{\varphi}: \Gr(S) \to H$ such that $\varphi = \bar{\varphi} \circ \rho$.
In particular, under the homomorphism $\varphi$, analogous elements have the same image.
Any semigroup $H$ with cancelation naturally extends to a group, namely its {\it group of formal differences}.
It consists of pairs of elements from $H$ where two pairs $(a, b)$ and $(c,d)$ are equal
if $a+d = b+c$. The {\it Grothendieck group of a semigroup $S$} is the group of formal differences of $\Gr(S)$.

The Grothendieck semigroup of $S$ contains significant information about $S$ and often is more tractable
and simpler to describe than $S$ itself.

We will be interested in semigroups of subspaces of functions (as well as sections of
line bundles) which arise naturally in algebraic geometry. Let $X$ be an irreducible variety over $\c$ with the field of rational functions $\c(X)$. Consider the collection $\K(X)$ of all nonzero finite dimensional subspaces of $\c(X)$.
For $L_1, L_2 \in \K(X)$ let $L_1L_2$ denote the linear span of all $fg$, $f \in L_1$, $g \in L_2$.
With this product, $\K(X)$ is a commutative semigroup. One shows that
for each $L \in \K(X)$ there is a largest subspace $\ol{L}$ which is analogous to $L$ called
the {\it completion of $L$} (see \cite[Appendix 4]{Zariski} and \cite{Askold-Kiumars-MMJ}).

An interesting and important special case is the algebraic torus $X = (\c^*)^n$. The variety $X$ is a
multiplicative group and acts on itself by multiplication. For $\alpha = (a_1, \ldots, a_n) \in \z^n$ let
$x^\alpha = x_1^{a_1} \cdots x_n^{a_n}$ denote the corresponding Laurent monomial which is a
regular function on $X$. Let $A \subset \z^n$ be a finite subset and let $L_A$ denote the
Laurent monomials spanned by $x^\alpha$ for $\alpha \in A$. The correspondence $A \mapsto L_A$ gives an isomorphism
between the semigroup $\K_T(X)$ of invariant subspaces of regular functions on $X$ and the semigroup of
finite subsets of $\z^n$. One then shows that the Grothendieck semigroup of $\K_T(X)$ is
isomorphic to the semigroup of integral convex polytopes with Minkowski sum.
Moreover, {\it for a finite subset $A$, the completion of the subspace $L_A$ is the
subspace $L_{\ol{A}}$ where $\ol{A}$ is the set of all integral points in the convex hull $\Delta(A)$}.
From this key fact one can deduce the Bernstein-Kushnirenko theorem: let $A_1, \ldots, A_n \subset \z^n$ be
finite subsets. Then the number of solutions in $(\c^*)^n$ of a generic system $f_1(x) = \cdots = f_n(x)$ where
$f_i \in L_{A_i}$, is equal to $n! V(\Delta_1, \ldots, \Delta_n)$. Here, for each $i$, $\Delta_i$ is the convex hull of
$A_i$ and $V$ denotes the mixed volume of convex bodies in $\r^n$ (see \cite{Askold-sum-of-finite-sets} and \cite{Askold-Kiumars-Kazarnovskii}, also see \cite{Kushnirenko} and \cite{Bernstein} for the original papers where
this theorem appeared).

In this paper we consider a class of homogeneous spaces of reductive groups which have similar features
as the torus $(\c^*)^n$.
Let $G$ be a connected reductive algebraic group over $\c$. A subgroup
$H \subset G$ is called {\it horospherical} if it contains a maximal unipotent subgroup of $G$.
The homogeneous space $G/H$ is then called a {\it horospherical homogeneous space}. This class of varieties and their
partial compactifications have been studied in \cite{Popov-Vinberg}.

In the present paper we describe the semigroup of $G$-invariant subspaces of regular functions
on a horospherical homogeneous space $X = G/H$ (respectively its Grothendieck semigroup) in terms of a semigroup of finite subsets (respectively integral convex polytopes). Moreover, we obtain a similar description of the
completion of a finite dimensional $G$-invariant subspace of regular functions on $X$.
Finally we generalize the above to invariant linear systems on $X$.
(Theorem \ref{th-semi-gp-inv-subspace-G/P'}, Corollary \ref{cor-semi-gp-inv-lin-system-G/H}).

From these we obtain an analogue of the Bernstein-Kushnirenko theorem for the number of solutions
in $X$ of a system $f_1(x) = \cdots = f_n(x) = 0$ where each $f_i$ is a generic element in some
finite dimensional $G$-invariant subspaces of regular functions on $X$, in terms of mixed volume of
polytopes. More generally, we prove a similar statement for the $G$-invariant linear systems on $X$.
We give two answers for the number of solutions. Firstly, we represent the answer as the (mixed) integral of an
explicitly defined homogeneous polynomial over the so-called {\it moment polytope}
(Corollary \ref{cor-int-index-subspace}, Corollary \ref{cor-int-index-lin-system}).
Secondly, we construct larger polytopes over the moment polytopes
such that their (mixed) volume is equal to the above (mixed) integral
(Corollary \ref{cor-int-index-subspace-mixed-vol}, Corollary \ref{cor-int-index-lin-system-mixed-vol}).

This paper is one of a series of papers devoted to the general theory of convex bodies associated to
algebraic varieties.
In \cite{Askold-Kiumars-MMJ} we develop an intersection theory of finite dimensional subspaces of
rational functions. In \cite{Askold-Kiumars-Newton-Okounkov} we develop a general theory of Newton-Okounkov bodies
associated to algebraic varieties and more generally graded algebra.
Finally in \cite{Askold-Kiumars-reductive} we consider the case of general varieties with a reductive group action.

The results of the present paper are along the same lines as \cite{Askold-Kiumars-Kazarnovskii}.
In there we describe the Grothendieck semigroup of finite dimensional
representations of a reductive group $G$ with tensor product. From this we give a proof of Kazarnovskii's
theorem on the number of solutions in $G$ of a generic system of equations consisting of matrix elements of representations of $G$. Some of the background material in the present paper are taken from \cite{Askold-Kiumars-Kazarnovskii}.

Among the different generalizations of Bernstein-Kushnirenko theorem (e.g. in \cite{Brion}, \cite{Kazarnovskii} and \cite{Askold-Kiumars-Newton-Okounkov}) the generalization of Bernstein-Kushnirenko (for horospherical homogeneous spaces) in this paper is closets to the original Bernstein-Kushnirenko theorem. We expect that other formulae in toric geometry involving Newton polytopes also extend to the horospherical case.

We would like to emphasize that a main difference of our approach (with many other authors) in
computation of intersection indices is that we do not require the varieties to be complete or projective and hence do not need any compactification.

And about the organization of material: Part I is devoted to preliminaries
on subspaces of rational functions, linear systems and their intersection indices,
notions of mixed volume and mixed integral and finally
semigroup of finite subsets of $\r^n$ and its Grothendieck semigroup.
In Part II we cover the main results of the paper. Section \ref{sec-horo-subgroup} discusses classification of horospherical subgroups of $G$. Section \ref{sec-G/P'} describes the semigroup of invariant subspaces, its Grothendieck semigroup and gives a formula for the intersection index on quasi-homogeneous horospherical spaces in terms of moment
polytopes. Section \ref{sec-G/H} discusses similar material
for invariant linear systems on general horospherical spaces. Finally in Section \ref{sec-G-C} we construct larger polytopes over moment polytopes whose volumes give the intersection index. The last section considers the example of $G=\GL(n, \c)$.\\

\noindent{\bf Notation:} Throughout the paper we will use the following notation.
\begin{itemize}
\item[-] $G$ denotes
a connected reductive algebraic group over $\c$ with $\dim(G) = d$.
\item[-] $B$ denotes a Borel subgroup of $G$ and $T$ and $U$ the maximal torus
and maximal unipotent subgroups contained in $B$ respectively. We put $\dim(T) = r$.
\item[-] $W$ denotes the Weyl group of $(G,T)$.
\item[-] $\Lambda$ denotes the weight lattice of $G$
(that is, the character group of $T$), and $\Lambda^+$ is the subset
of dominant weights (for the choice of $B$). Put $\Lambda_\r = \Lambda \otimes_{\z} \r$.
Then the convex cone generated by $\Lambda^+$ in $\Lambda_\r$ is the
positive Weyl chamber $\Lambda^+_{\r}$.
\item[-] For a weight $\lambda \in
\Lambda$, the irreducible $G$-module corresponding to $\lambda$ will
be denoted by $V_\lambda$ and a highest weight vector in $V_\lambda$ will
be denoted by $v_\lambda$.
\item[-] For an algebraic group $K$, we denote the group of characters of $K$
(written additively) by $\mathfrak{X}(K)$.
\item[-] $P$ denotes a parabolic subgroup of $G$ and $P'$ its commutator subgroup.
\item[-] $H$ will denote a horospherical subgroup, i.e. a subgroup of $G$ containing a
maximal unipotent subgroup $U$.
\end{itemize}

\section{Part I: Preliminaries} \label{part-preliminaries}
\subsection{Intersection theory of finite dimensional subspaces and linear systems} \label{sec-int-index}
Let $X$ be a complex $n$-dimensional irreducible variety with $\c(X)$ its field of rational
functions. Consider the collection $\K(X)$ of all nonzero finite dimensional subspaces of
$\c(X)$. The {\it product} of two subspaces $L_1, L_2 \in \K(X)$ is the subspace spanned by all the
$fg$ where $f \in L_1$, $g \in L_2$. With this product $\K(X)$ is a commutative semigroup.

\begin{Def} \label{def-int-index}
The {\it intersection index} $[L_1, \ldots, L_n]$ is the number of solutions in $X$
of a generic system of equations $f_1 = \cdots = f_n = 0$ where $f_i \in L_i$, $1 \leq i \leq n$.
In counting the solutions, we
neglect the solutions $x$ at which all the functions in some space $L_i$
vanish as well as the solutions at which at least one function from some
space $L_i$ has a pole.
\end{Def}

One shows that the intersection index is well-defined (i.e. is independent of the choice of a
generic system) \cite{Askold-Kiumars-MMJ}. It is obvious that the intersection index is symmetric with respect
to permuting the subspaces $L_i$. Moreover, the intersection index is linear in each argument.
The linearity in first argument means:
\begin{equation} \label{equ-*}
[L'_1L''_1, L_2,\dots, L_n]= [L'_1, L_2,\dots, L_n] + [L''_1, L_2,\dots,L_n],
\end{equation}
for any $L'_1, L''_1, L_2, \ldots, L_n \in \K(X)$.
From (\ref{equ-*}) one sees that for a fixed $(n-2)$-tuple of subspaces
$L_2, \ldots, L_n \in \K(X)$, the map $\pi: \K(X) \to \r$ given by $\pi(L) = [L, L_2, \ldots, L_n]$
is a homomorphism from the semigroup $\K(X)$ to the additive group of integers. The existence of such
a homomorphism shows that the intersection index induces an intersection index on
$\Gr(\K(X))$, i.e. {\it the intersection index $[L_1, \ldots, L_n]$ remains invariant if we substitute each
$L_i$ with an analogous subspace $\tilde{L}_i$.}

One can describe the relation of analogous subspaces in a different way as follows (see \cite{Askold-Kiumars-MMJ}).
A rational function $f \in \c(X)$ is called {\it integral over the subspace $L$} if
it satisfies an equation $$f^m+a_1 f^{m-1} + \dots a_0 =0$$ with $m>0$ and $a_i \in L^i$, $1 \leq i \leq m$.
The collection of all the rational functions integral over $L$ forms a finite dimensional subspace
$\overline{L}$ called the {\it completion of $L$}.

\begin{Prop} \label{prop-1}
1) Two subspaces $L_1, L_2 \in \K(X)$ are analogous if and only if $\overline{L}_1 = \overline{L}_2$.
2) For any $L \in \K(X)$, the completion $\overline{L}$ belongs to $\K(X)$ and
is analogous to $L$.
3) Moreover, the completion $\overline{L}$ contains all the subspaces $M \in \K(X)$ analogous to $L$.
\end{Prop}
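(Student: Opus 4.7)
The plan is to pivot everything on a single reformulation of integrality: $f \in \c(X)$ is integral over $L \in \K(X)$ if and only if there exists $M \in \K(X)$ with $fM \subseteq LM$. The ``only if'' direction is the adjugate trick applied to a basis $m_1,\ldots,m_k$ of $M$: writing $fm_j = \sum_i b_{ij}m_i$ with $b_{ij} \in L$, the identity $\det(fI - B)\cdot m_j = 0$ holds in $\c(X)$, which is a field since $X$ is irreducible, forcing $\det(fI - B) = 0$; the coefficient of $f^{k-i}$ is a sum of $i\times i$ minors of $B$, hence lies in $L^i$. The converse is obtained by taking $M = \sum_{j=0}^{m-1} L^{m-1-j}f^j$ and using the integral relation for $f$ to reduce $f\cdot M$ into $L\cdot M$.

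The central technical tool extracted from this characterization is a ``uniform $M$'' principle: if $g_1,\ldots,g_s$ satisfy $g_iN_i \subseteq LN_i$ individually, then $N := N_1 \cdots N_s$ satisfies $g_iN \subseteq LN$ for all $i$ simultaneously (factor $g_iN_i$ out of $N$). I would deploy this in two ways. First, to prove transitivity of integral closure: applied to a spanning set of $\overline{L}$ it yields a single $N$ with $\overline{L}\cdot N \subseteq L\cdot N$, and combining with a witness for $f$ integral over $\overline{L}$ produces a witness over $L$. Second, once $\overline{L}$ is known to be finite-dimensional, the same principle produces $M \in \K(X)$ with $\overline{L}\cdot M \subseteq L\cdot M$; combined with $L \subseteq \overline{L}$ (since any $f \in L$ satisfies the degree-one integral relation $f + (-f) = 0$), this gives $\overline{L}\cdot M = L\cdot M$, i.e.\ $\overline{L} \sim L$.

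To secure the finite-dimensionality of $\overline{L}$, I would pass to the Rees-type graded algebra $R := \bigoplus_{k \geq 0} L^k t^k \subseteq \c(X)[t]$, which is a finitely generated graded $\c$-algebra domain (generated by $Lt$). By the classical finiteness of integral closure for affine domains, its integral closure $\widetilde R$ in $\textup{Frac}(R)$ is a finite $R$-module; it is moreover graded, by $\c^*$-equivariance of the construction, so each graded piece of $\widetilde R$ is finite-dimensional over $\c$. The integral relation for $f \in \overline{L}$ lifts to an integral relation for $ft$ over $R$, so $\overline{L}\cdot t$ sits inside $\widetilde R_1$, and $\overline{L}$ is finite-dimensional. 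This completes part (2).

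Parts (1) and (3) follow formally. For the forward implication of (1), $L_1 N = L_2 N$ gives $fN \subseteq L_1 N$ for every $f \in L_2$, so $L_2 \subseteq \overline{L}_1$ by the characterization, and transitivity upgrades this to $\overline{L}_2 \subseteq \overline{L}_1$; symmetry yields equality. For the reverse implication, applying the uniform-$M$ principle to a basis of $L_1 \subseteq \overline{L}_2$ produces $N'$ with $L_1 N' \subseteq L_2 N'$, symmetrically $N''$ with $L_2 N'' \subseteq L_1 N''$, and $N := N' N''$ gives $L_1 N = L_2 N$. For (3), $M \sim L$ implies $\overline{M} = \overline{L}$ by (1), and $M \subseteq \overline{M}$ tautologically. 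The main obstacle will be the finite-dimensionality of $\overline{L}$ in part (2): the uniform-$M$ argument is circular on its own, and one must import the external input that the integral closure of an affine domain is module-finite; everything else reduces to formal manipulation of products in $\K(X)$.
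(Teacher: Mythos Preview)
The paper does not supply a proof of this proposition; it is stated as a preliminary fact with a reference to \cite{Askold-Kiumars-MMJ} (and, implicitly, \cite{Zariski}). Your overall strategy via the module-theoretic characterization $f\in\overline L \Longleftrightarrow \exists\, M\in\K(X),\ fM\subseteq LM$ together with the ``uniform $M$'' trick is sound, and the deductions of (1), (3), and $\overline L\sim L$ from finite-dimensionality of $\overline L$ are correct. (A small slip: your ``only if'' and ``if'' labels are swapped---the adjugate argument shows that $fM\subseteq LM$ \emph{implies} integrality, which is the ``if'' direction of your equivalence.)

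There is, however, a genuine gap in the finite-dimensionality step. You take $\widetilde R$ to be the integral closure of $R=\bigoplus_k L^k t^k$ in $\textup{Frac}(R)$ and claim $\overline L\cdot t\subseteq\widetilde R_1$. But an integral relation for $ft$ over $R$ does not by itself place $ft$ in $\textup{Frac}(R)$, so you cannot conclude $ft\in\widetilde R$. Concretely, take $X=\c^*$ and $L=\span\{1,z^2\}$: then $R=\c[t,z^2t]$, $\textup{Frac}(R)=\c(t,z^2)$, and $z\in\overline L$ (via $z^2+0\cdot z-(z^2)=0$ with $z^2\in L^2$), yet $zt\notin\c(t,z^2)$ because $z\notin\c(z^2)$. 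What the integral relation \emph{does} yield, after dividing by $g^m$ for a fixed nonzero $g\in L$, is that $f/g$ is algebraic over the subfield $K:=\c(h/g:h\in L)\subset\c(X)$; one checks $\textup{Frac}(R)=K(gt)$. Since $\c(X)$ is finitely generated over $\c$, the relative algebraic closure $K'$ of $K$ inside $\c(X)$ is a finite extension of $K$, so $K'(gt)$ is a finite extension of $\textup{Frac}(R)$. Replacing $\widetilde R$ by the integral closure of $R$ in $K'(gt)$ --- still graded and still module-finite over $R$ --- one now genuinely has $ft=(f/g)(gt)\in K'(gt)$ and hence $ft\in\widetilde R_1$, which repairs the argument. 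As written, the step ``$\overline L\cdot t\subseteq\widetilde R_1$'' fails.
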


For $L \in \K(X)$ define the Hilbert function $H_L$ by $H_L(k) = \dim (\overline{L^k})$.
The following theorem provides a way to compute the self-intersection index of a subspace $L$
(see \cite[Part II]{Askold-Kiumars-Newton-Okounkov}):
\begin{Th} \label{th-intersec-index-Hilbert-function}
For any $L \in \K(X)$, the limit $$a(L) = \lim_{k \to \infty} H_L(k)/k^n$$ exists, and
the self-intersection index $[L, \ldots, L]$ is equal to $n! a(L)$.
\end{Th}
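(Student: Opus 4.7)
The plan is to realize the self-intersection index geometrically as the degree of a projective variety via the rational map defined by $L$, and then control $H_L(k) = \dim \ol{L^k}$ by the standard asymptotic Riemann--Roch theorem (Snapper's theorem) applied on a projective model.

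Fix a basis $f_0, \ldots, f_N$ of $L$ and consider the rational map $\Phi_L: X \dashrightarrow \p^N$, $x \mapsto [f_0(x) : \cdots : f_N(x)]$. Let $Y$ be the closure of its image and $m = \dim Y$. I would split into two cases. If $m < n$, a generic codimension-$n$ linear section of $\p^N$ misses $Y$, so by Definition \ref{def-int-index} we have $[L, \ldots, L] = 0$; at the same time $\ol{L^k}$ embeds (via pullback) into a space whose dimension grows like the Hilbert polynomial of the normalization of $Y$, which is $O(k^m) = o(k^n)$, so $a(L) = 0$ as well.

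In the main case $m = n$, after reducing to the birational case and tracking the mapping degree $d$ of $\Phi_L$ at the end, let $\nu: \tilde Y \to Y$ be the normalization and $\mathcal{L} = \nu^* \mathcal{O}_{\p^N}(1)$. The central claim is that, viewed inside $\c(X) = \c(Y) = \c(\tilde Y)$, one has an identification $\ol{L^k} = H^0(\tilde Y, \mathcal{L}^k)$. Granting this, Snapper's theorem gives
\[ \dim H^0(\tilde Y, \mathcal{L}^k) = \frac{(\mathcal{L}^n)}{n!}\, k^n + O(k^{n-1}), \]
so $a(L) = (\mathcal{L}^n)/n!$ exists. To finish, $(\mathcal{L}^n)$ is by definition the number of points in $Y$ cut out by $n$ generic hyperplanes, and in the birational case these points correspond bijectively to the solutions on $X$ counted by $[L, \ldots, L]$; in the general case the mapping degree $d$ appears on both sides and the equality $[L, \ldots, L] = n!\, a(L)$ is preserved.

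The main obstacle is the identification $\ol{L^k} = H^0(\tilde Y, \mathcal{L}^k)$. One inclusion is classical: a rational function on $Y$ satisfying a monic equation with coefficients pulled back from powers of $L$ extends to a section on the normalization $\tilde Y$. The opposite inclusion is the delicate point: one needs to show every section of $\mathcal{L}^k$ on $\tilde Y$ satisfies an integral dependence relation over the graded subalgebra $\bigoplus_j L^j$, which is essentially the statement that $\bigoplus_k H^0(\tilde Y, \mathcal{L}^k)$ is the integral closure of $\bigoplus_k L^k$ as graded rings. Carrying this out cleanly also requires handling the base locus of $\Phi_L$ and the difference between $X$ and a Zariski-open subset of $Y$, which is the technical heart of the proof and where I would expect to draw on the detailed analysis in \cite{Askold-Kiumars-Newton-Okounkov}.
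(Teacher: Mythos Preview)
Your approach is essentially the one the paper indicates; the paper gives only a one-line sketch (``the proof is based on the Hilbert theorem on the dimension and degree of a subvariety of the projective space''), and your proposal is a reasonable fleshing-out of that sketch via the Kodaira map $\Phi_L$ and asymptotic Riemann--Roch.

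One remark on the route you chose versus the more direct reading of the paper's hint. Rather than passing to the normalization $\tilde Y$ and invoking Snapper, one can work directly with the graded ring $R=\bigoplus_{k\ge 0} L^k$, which is the homogeneous coordinate ring of $Y=\ol{\Phi_L(X)}\subset\p^N$. The graded ring $\bigoplus_{k\ge 0}\ol{L^k}$ is precisely the integral closure of $R$ inside $\c(X)[t]$ and is therefore a finite $R$-module; hence its Hilbert function differs from that of $R$ only in lower-order terms. The classical Hilbert theorem then gives $H_L(k)=\dim\ol{L^k}=\dfrac{\deg Y}{n!}\,k^n+O(k^{n-1})$ immediately, and the identification of $\deg Y$ with $[L,\ldots,L]$ (tracking the mapping degree and generic hyperplane sections) is exactly as you describe. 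This bypasses the pointwise identification $\ol{L^k}=H^0(\tilde Y,\mathcal{L}^{k})$, which you rightly flag as delicate and which can indeed fail for small $k$ (the section ring of $(\tilde Y,\mathcal{L})$ and the integral closure of $R$ need only agree in high enough degrees); for the limit statement one never needs it.
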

The proof is based on the Hilbert theorem on the dimension and degree of a subvariety of the projective space.\\

A linear system on $X$ is a family of effective divisors of the form $D + (f)$ where $D$ is an
effective divisor on $X$ and $f$ lies in a finite dimensional subspace $L \subset \c(X)$. In this
section we consider the intersection index of linear systems.
Let us assume that $D$ is a Cartier divisor and let $\L$ be the line bundle associated to $D$.
Any element $D + (f)$ determines a section of the line bundle $\L$ up to multiplication by a
regular nowhere zero function, i.e. an element of $\c[X]^*$. Thus a linear system determines a
subspace of holomorphic sections of $\L$ up to multiplication by a function in $\c[X]^*$.

Conversely a finite dimensional subspace $E$ of holomorphic sections $H^0(X, \L)$ determines a
linear system of divisors $\{ \Div(s) \mid 0 \neq s \in E \}.$
By abuse of terminology we will refer to $(E, \L)$ (or simply $E$) as a linear system on $X$.
Fix a nonzero section $t \in E$. Then every other section $s \in E$ can be written as
$s = f_s t$ for a unique $f_s \in \c(X)$. The map $s \mapsto f_s$ identifies $E$ with the
subspace of rational functions $\{ f_s \mid s \in E \}$.

Let $(E_1, \L_1)$, $(E_2, \L_2)$ be two linear systems on $X$. There is a tensor product map
$H^0(X, \L_1) \otimes H^0(X, \L_2) \to H^0(X, \L_1 \otimes \L_2)$. Let $E_1 E_2$ denote
the span of all the products $f_1f_2 \in H^0(X, \L_1 \otimes \L_2)$ for $f_1 \in E_1$,
$f_2 \in E_2$. We call $(E_1E_2, \L_1 \otimes \L_2)$ the {\it product of two linear systems
$(E_1, \L_1)$, $(E_2, \L_2)$}. With this product the collection $\tilde{\K}(X)$ of all
the linear systems on $X$ is a commutative semigroup.

Again fix a nonzero section $t \in E$ and let $L = \{f_s \mid s \in E \}$ be the corresponding
subspace of rational functions. Define the {\it completion of the linear system $E$} to be the
subspace $\ol{E} = \{ ft \mid f \in \ol{L}\}$ where $\ol{L}$ is the completion of the subspace
$L$ (as defined above). If $X$ is normal, one verifies that $\ol{E}$ still consists of holomorphic sections i.e.
$\ol{E} \subset H^0(X, \L)$.
One also verifies that for any rational function $h$ we have $\ol{hL} = h \ol{L}$, from which it follows
that $\ol{E}$ is well-define, i.e. is independent of the choice of the section $t$.

A linear system is said to have {\it no base locus} if the intersection of the supports of the
divisors $D + (f)$, $\forall f \in L$, is empty. In other words, if $E \subset \L$ is a subspace of
holomorphic sections representing a linear system then $E$ has no base locus if for any $x \in X$ there
is $s \in E$ with $s(x) \neq 0$.

\begin{Def}[Intersection index of linear systems] \label{def-int-index-lin-system}
Let $\L_1, \ldots, \L_n$ be line bundles on $X$ with linear systems
$E_i \subset H^0(X, \L_i)$ for $i=1,\ldots, n$ with no base locus. The {\it intersection index}
$[E_1, \ldots, E_n]$ is the number of points in $D_1 \cap \cdots \cap D_n$ where
$D_i$ is a generic divisor in the linear system $E_i$, i.e. $D_i = \Div(s_i)$ where
$0 \neq s_i$ is a generic element of $E_i$. For each $i$, fix a section $t_i \in E_i$ and let
$L_i \in \c(X)$ be the subspace associated to $E_i$ and $t_i$. One sees that
$[E_1, \ldots, E_n]$ is in fact equal to the intersection index $[L_1, \ldots, L_n]$ of
subspaces of rational functions and hence is well-defined.
\end{Def}

The intersection index of linear systems enjoys properties similar to the intersection index of
subspaces:
\begin{enumerate}
\item The intersection index is symmetric with respect to permuting the arguments.
\item The intersection index is multi-linear with respect to the product of linear systems.
\item The intersection index $[E_1, \ldots, E_n]$ does not change if we replace any of the $E_i$ with
an analogous linear system $\tilde{E}_i$ (in particular with the completion $\ol{E_i}$).
\item As for subspaces of rational functions, for a linear system $E$ on $X$ let us define the
{\it Hilbert function}
by $H_E(k) = \dim(\ol{E^k})$. Then the limit $$a(E) = \lim_{k \to \infty} H_E(k)/k^n$$ exists, and
the self-intersection index $[E, \ldots, E]$ is equal to $n! a(E)$.
\end{enumerate}

\subsection{Mixed volume and mixed integral} \label{sec-mixed-vol}
A function $F: \V \to \r$ on a (possibly infinite dimensional) vector space $\V$ is called
a homogeneous polynomial of degree $k$ if its restriction to any finite dimensional subspace of $\V$
is a homogeneous polynomial of degree $k$. (For any $k$, the constant zero function is a homogeneous
polynomial of degree $k$.)

\begin{Def}
To a symmetric multi-linear function $B(v_1, \ldots, v_k)$, $v_i \in \V$ one corresponds a homogeneous polynomial $P$ of degree $k$ on $\V$ defined by $P(v) = B(v, \ldots, v)$. We say that the symmetric form $B$ is a
{\it polarization of the homogeneous polynomial $P$}.
\end{Def}

If $F$ is a homogeneous polynomial of degree $k$, then its derivative $F'_v(x)$ in the direction of a
vector $v$ is linear in $v$ and homogeneous of degree $k-1$ in $x$. Let $v_1, \ldots, v_k$ be
a $k$-tuple of vectors. For each $x$, the $k$-th derivative $F^{(k)}_{v_1, \ldots, v_k}(x)$
is a symmetric multi-linear function in the $v_i$. One easily verifies the following:

\begin{Prop}
Any homogeneous polynomial of degree $k$ has a unique {\it polarization} $B$ defined by the
formula: $$B(v_1,\dots,v_k)= (1/k!) F^{(k)}_{v_1,\dots,v_k}.$$
\end{Prop}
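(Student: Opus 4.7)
The plan is to reduce to a finite-dimensional subspace, verify that the stated formula does define a symmetric multilinear polarization of $F$, and then establish uniqueness via a standard polarization identity.

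First I would reduce to the finite-dimensional case. Given $v_1, \ldots, v_k$ and a point $x \in \V$, restrict $F$ to any finite-dimensional subspace $W \subset \V$ containing them; by the definition of a homogeneous polynomial, $F|_W$ is a genuine polynomial of degree $k$ on $W$, so all the assertions are local in this sense and we may assume $\V$ is finite-dimensional. For existence, the key observation is that a directional derivative lowers the total degree in $x$ by one, so $F^{(k)}_{v_1,\ldots,v_k}(x)$ is a polynomial of degree $0$ in $x$, i.e.\ a constant; this is what makes the formula $B(v_1, \ldots, v_k) := (1/k!)\, F^{(k)}_{v_1,\ldots,v_k}$ unambiguous. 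Symmetry and multilinearity in the $v_i$ are immediate from the elementary properties of mixed directional derivatives, and the diagonal identity $B(v, \ldots, v) = F(v)$ follows by expanding
\[
F(tv) \;=\; \sum_{j=0}^{k} \frac{t^j}{j!}\, F^{(j)}_{v,\ldots,v}(0)
\]
via Taylor's formula at $0$ and comparing with $F(tv) = t^k F(v)$ from homogeneity: the coefficient of $t^k$ on each side gives $F(v) = (1/k!)\, F^{(k)}_{v,\ldots,v}(0)$.

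For uniqueness, suppose $B_1$ and $B_2$ are symmetric multilinear forms with common diagonal $F$. Their difference $B := B_1 - B_2$ is symmetric multilinear and vanishes on the diagonal, and expanding the diagonal values $B\bigl(\sum_{i \in S} v_i,\ldots,\sum_{i \in S} v_i\bigr)$ by multilinearity together with an inclusion-exclusion argument yields the standard polarization identity
\[
k!\, B(v_1, \ldots, v_k) \;=\; \sum_{\emptyset \neq S \subseteq \{1,\ldots,k\}} (-1)^{k-|S|}\, B\Bigl(\textstyle\sum_{i \in S} v_i,\ldots,\sum_{i \in S} v_i\Bigr),
\]
whose right-hand side vanishes. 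Since we are in characteristic zero, this forces $B \equiv 0$, giving uniqueness.

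The ``hard part'' is in fact very mild: the only subtlety is noting that for a homogeneous polynomial of degree exactly $k$ the $k$-th mixed directional derivative is constant in $x$, which is precisely what makes the formula for $B$ well-defined as a function of the $v_i$ alone. Once this is observed, both existence and uniqueness reduce to the two elementary identities above, and no further input is needed.
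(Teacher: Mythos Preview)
Your argument is correct and complete. The paper itself does not actually prove this proposition: it simply precedes it with the observation that the $k$-th directional derivative $F^{(k)}_{v_1,\ldots,v_k}(x)$ is symmetric and multilinear in the $v_i$, and then states the result with the phrase ``One easily verifies the following.'' Your proposal supplies exactly the verification the paper omits---the reduction to a finite-dimensional subspace, the Taylor expansion giving the diagonal identity, and the polarization identity for uniqueness---so there is no meaningful methodological difference to compare.
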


A compact convex subset of $\r^n$ is called a {\it convex body}.
Consider the collection of convex bodies in $\r^n$. There are two
operations of Minkowski sum and multiplication by a non-negative
scalar on convex bodies. The collection of convex bodies with
Minkowski sum is a semigroup with cancelation. The
multiplication by a non-negative scalar is associative and
distributive with respect to the Minkowski sum. These
properties allow us to extend the collection of convex bodies to the
(infinite dimensional) vector space $\V$ of {\it virtual convex
bodies} consisting of formal differences of convex bodies (see
\cite{Burago-Zalgaller}).

Let $d\mu = dx_1 \cdots dx_n$ be the standard  Euclidean measure in $\r^n$.
For each convex body $\Delta \subset \r^n$ let $\Vol(\Delta) = \int_\Delta d\mu$ be its volume.
The following statement is well-known:

\begin{Prop}
The function $\Vol$ has a unique extension to the vector space $\V$ of virtual convex bodies as a
homogeneous polynomial of degree $n$.
\end{Prop}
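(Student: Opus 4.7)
The plan is to reduce the problem to Minkowski's classical polynomiality theorem for volumes of Minkowski sums and then invoke the cancellation property of the semigroup of convex bodies to extend a symmetric multilinear ``mixed volume'' to all of $\V$. First I would establish the core analytic fact: if $\Delta_1, \ldots, \Delta_m$ are convex bodies in $\r^n$ and $\lambda_1, \ldots, \lambda_m \geq 0$, then $\Vol(\lambda_1 \Delta_1 + \cdots + \lambda_m \Delta_m)$ is a homogeneous polynomial of degree $n$ in the $\lambda_i$. The standard route is to approximate each $\Delta_i$ by convex polytopes, for which polynomiality can be seen by decomposing the Minkowski combination into pieces indexed by tuples of faces and summing their volumes, and then to pass to the limit using continuity of $\Vol$ in the Hausdorff metric. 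This is the only step that uses genuine convex geometry.

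Given the polynomiality, I would extract the polarization. Applied within any finite-dimensional subspace of $\V$ spanned by finitely many convex bodies, the earlier proposition on polarizations yields a unique symmetric function $V(\Delta_1, \ldots, \Delta_n)$ on $n$-tuples of convex bodies with $\Vol(\Delta) = V(\Delta, \ldots, \Delta)$. Polynomiality in the $\lambda_i$ forces $V$ to be additive (hence multilinear over the semigroup) in each argument with respect to Minkowski sum.

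Next I would push $V$ forward to $\V$. Because the semigroup of convex bodies under Minkowski sum has cancellation, every element of $\V$ is represented by a formal difference $\Delta' - \Delta''$ of convex bodies, with $\Delta'_1 - \Delta''_1 = \Delta'_2 - \Delta''_2$ if and only if $\Delta'_1 + \Delta''_2 = \Delta'_2 + \Delta''_1$. Additivity of $V$ in each slot then lets me define
\[
V(\Delta' - \Delta'', A_2, \ldots, A_n) := V(\Delta', A_2, \ldots, A_n) - V(\Delta'', A_2, \ldots, A_n)
\]
on the group of virtual bodies in the first slot; well-definedness is immediate from additivity. Iterating in the remaining slots produces a symmetric multilinear form $V \colon \V^n \to \r$, and setting $\Vol(A) := V(A, \ldots, A)$ gives a function whose restriction to any finite-dimensional subspace of $\V$ is a homogeneous polynomial of degree $n$, which is the required extension.

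Uniqueness is formal. Any other homogeneous polynomial extension $F$ of degree $n$ has a unique symmetric multilinear polarization $\tilde V$ by the polarization proposition. The form $\tilde V$ agrees with $V$ on $n$-tuples of honest convex bodies (both are symmetric multilinear with diagonal $\Vol$), and convex bodies generate $\V$ as a group, so $\tilde V = V$ on all of $\V^n$, whence $F = \Vol$. The main obstacle throughout is Step~1, the Minkowski polynomiality of $\Vol$ on Minkowski combinations of convex bodies; once it is available the rest follows mechanically from the polarization formalism and the cancellation property built into the definition of $\V$.
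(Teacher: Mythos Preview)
The paper does not supply a proof of this proposition: it is stated as ``well-known'' immediately after the reference to \cite{Burago-Zalgaller} for the construction of $\V$, and no argument is given. So there is nothing in the paper to compare your proof against.

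That said, your outline is correct and is precisely the standard route one finds in the convex-geometry literature (e.g.\ Burago--Zalgaller or Schneider): Minkowski's polynomiality of $\Vol(\lambda_1\Delta_1+\cdots+\lambda_m\Delta_m)$ gives a symmetric multilinear mixed volume on tuples of genuine convex bodies, cancellation lets you extend it slotwise to formal differences, and uniqueness follows from the uniqueness of polarization. Your identification of Step~1 (polynomiality) as the only substantive analytic input is also accurate.
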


\begin{Def}
The {\it mixed volume} $V(\Delta_1, \ldots, \Delta_n)$ of the convex bodies $\Delta_i$
is the value of the polarization of the volume polynomial $\Vol$ at $(\Delta_1, \ldots, \Delta_n)$.
\end{Def}

Fix a homogeneous polynomial $F$ of degree $p$ in $\r^n$. Let
$IF(\Delta) = \int_\Delta F d\mu$ denote the integral of $F$ on $\Delta$.
One has the following (see for example \cite{Kh-P}):

\begin{Prop}
The function $IF$ has a unique extension to the vector space $\V$ of virtual convex bodies as
a homogeneous polynomial of degree $n+p$.
\end{Prop}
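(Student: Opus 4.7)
The plan is to mimic the proof for the volume polynomial (the previous proposition), carrying the weight $F$ through the argument. Uniqueness is immediate from polarization: a homogeneous polynomial of degree $n+p$ on $\V$ is determined by its associated symmetric $(n+p)$-linear form, and that form is determined by its diagonal values on $(n+p)$-tuples of genuine convex bodies (which span the positive cone in $\V$). For existence I would invoke the standard Minkowski-type criterion (see \cite{Burago-Zalgaller}): a function $f$ on the cone of convex bodies extends to a homogeneous polynomial of degree $k$ on $\V$ if and only if $f(\lambda \Delta) = \lambda^k f(\Delta)$ for every $\lambda \geq 0$, and for every finite family $\Delta_1, \ldots, \Delta_N$ of convex bodies the function $(\lambda_1, \ldots, \lambda_N) \mapsto f(\lambda_1 \Delta_1 + \cdots + \lambda_N \Delta_N)$ is a polynomial of total degree $k$ on $\r_{\geq 0}^N$. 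With $k = n + p$ the first condition for $IF$ follows from the change of variables $x = \lambda y$: the Jacobian contributes $\lambda^n$ and the degree-$p$ homogeneity of $F$ contributes $\lambda^p$.

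The substance of the proof is the polynomiality in Minkowski sums. Since $IF$ is continuous in the Hausdorff metric (by dominated convergence, using that $F$ is bounded on any fixed compact neighborhood of the bodies in play) and polytopes are Hausdorff-dense in the cone of convex bodies, it is enough to establish polynomiality when each $\Delta_i$ is a polytope. For polytopes I would use Euler's identity: since $F$ is homogeneous of degree $p$, $\nabla \cdot (F(x)\,x) = (n+p) F(x)$, so the divergence theorem gives
\[
IF(\Delta) \;=\; \frac{1}{n+p} \int_{\partial \Delta} F(x)\, \langle x, \nu \rangle\, dS \;=\; \frac{1}{n+p} \sum_{\sigma} h_\Delta(\nu_\sigma) \int_\sigma F(x)\, dS,
\]
the sum running over the facets $\sigma$ of $\Delta$ with outward unit normal $\nu_\sigma$. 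On a fixed chamber of the common refinement of the normal fans of the $\Delta_i$, the combinatorial type of the normal fan of $\lambda_1 \Delta_1 + \cdots + \lambda_N \Delta_N$ is constant, the support function $h_{\sum \lambda_i \Delta_i}(\nu_\sigma) = \sum_i \lambda_i h_{\Delta_i}(\nu_\sigma)$ is linear in $\lambda$, and each facet is a translate of the corresponding Minkowski sum of facets of the $\Delta_i$. By induction on $n$ applied to $\int_\sigma F\, dS$ (an $(n{-}1)$-dimensional integral of the same type, with $F$ restricted to the affine hyperplane spanned by $\sigma$), these facet integrals are polynomials of degree $n-1+p$ in $\lambda$, whence $IF(\sum \lambda_i \Delta_i)$ is polynomial of degree $n+p$ on each chamber.

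The main technical obstacle is to ensure these chamber-by-chamber polynomials glue into a single global polynomial. This is resolved as in the volume case: the polynomial expressions on adjacent chambers agree on their common wall by the continuity of $IF$, and a continuous function that is piecewise polynomial of bounded degree is globally polynomial. Once the gluing is completed, polarization produces the desired extension of $IF$ to $\V$, and the uniqueness observation above guarantees that this extension is unique.
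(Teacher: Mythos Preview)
The paper does not prove this proposition; it only refers to \cite{Kh-P}. So there is nothing in the paper itself to compare your argument against beyond that citation.

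Your overall plan---reduce to polytopes by Hausdorff density, apply the divergence theorem via Euler's identity, and induct on $n$---is a legitimate route to the result. Two points, however, need repair.

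First, the induction step as written does not close. The facet $\sigma$ sits in the affine hyperplane $\{x:\langle x,\nu_\sigma\rangle=h\}$ with $h=\sum_i\lambda_i\,h_{\Delta_i}(\nu_\sigma)$ depending on $\lambda$, and the restriction of $F$ to that moving hyperplane is not a homogeneous polynomial on a linear subspace, so $\int_\sigma F\,dS$ is not literally ``of the same type'' as the original integral. A clean fix is to translate: write $x=h\nu_\sigma+y$ with $y\in\nu_\sigma^{\perp}$ and expand $F(h\nu_\sigma+y)=\sum_{j=0}^{p}h^{j}F_j(y)$ with each $F_j$ homogeneous of degree $p-j$ on $\nu_\sigma^{\perp}$. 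By induction each $\int_{\sigma-h\nu_\sigma}F_j(y)\,dy$ is a polynomial in $\lambda$ of degree $(n-1)+(p-j)$; multiplying by $h^{j}$ (degree $j$) and then by the support value $h$ (degree $1$) restores total degree $n+p$. Alternatively, strengthen the induction hypothesis to arbitrary polynomials $F$, not just homogeneous ones.

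Second, the chamber/gluing discussion is both unnecessary and, as stated, incorrect. For $\lambda$ in the open orthant $\r_{>0}^{N}$ the normal fan of $\lambda_1\Delta_1+\cdots+\lambda_N\Delta_N$ is always the common refinement of the normal fans of the $\Delta_i$, independently of $\lambda$; there is no chamber decomposition of $\lambda$-space, and your formula is already a single polynomial on the whole open orthant, hence (by continuity) on the closed orthant. Moreover, the blanket assertion that ``a continuous function that is piecewise polynomial of bounded degree is globally polynomial'' is false: $\max(x,y)$ on $\r_{\geq 0}^{2}$ is continuous and piecewise linear but not a polynomial. So that principle should not be invoked. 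With these two corrections your argument goes through.
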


\begin{Def}
The mixed integral $IF(\Delta_1, \ldots, \Delta_{n+p})$ of a homogeneous polynomial $F$ over the bodies
$\Delta_1, \ldots, \Delta_{n+p}$ is the value of the polarization of the
polynomial $IF$ at the bodies $\Delta_1, \ldots, \Delta_{n+p}$.
\end{Def}
From definition, the mixed integral of the constant polynomial $F \equiv 1$ is the mixed volume.

More generally we can consider the mixed volume and mixed integral for convex bodies in $\r^n$ which
are parallel to a fixed subspace of $\r^n$. Fix a subspace $\Pi \subset \r^n$ with $\dim(\Pi) = m$.
Consider the collection of convex bodies which are parallel to $\Pi$, i.e.
lie in a translate $a+\Pi$ of $\Pi$ for some $a \in \r^n$. This collection is closed under addition and
multiplication by nonnegative scalars. Let $\V(\Pi)$ denote the subspace of all virtual convex bodies $\V$ spanned by the convex bodies parallel to $\Pi$. Fix a Lebesgue measure on $\Pi$ and equip each translate of $\Pi$ with a
Lebesgue measure by shifting the measure on $\Pi$. We denote all these measures by $d\gamma$. Let
$\Delta \subset a+\Pi$ be a convex body parallel to $\Pi$. The map $$\Delta \mapsto \Vol_\Pi(\Delta)$$ is a
homogeneous polynomial of degree $m$ on the vector space $\V(\Pi)$ where $\Vol_\Pi$ is the volume with respect to
the Lebesgue measure $d\gamma$. We will denote the polarization
of $\Vol_\Pi$ on $\V(\Pi)$ by $V_\Pi$ and call it the {\it mixed volume of convex bodies parallel to $\Pi$}.

Similarly, let $F$ be a homogeneous polynomial on $\r^n$ of degree $d$. Then the map
$$\Delta \mapsto \int_\Delta F d\gamma$$
is a homogeneous polynomial on $\V(\Pi)$. We will denote the polarization of this by
$IF_\Pi$. It is a $(m+d)$-linear function on $\V(\Pi)$.

\subsection{Semigroup of finite sets with respect to addition} \label{sec-finite-sets}
There is an addition operation on the collection of subsets of $\r^n$. The sum of two sets $A$ and $B$ is the set
$A+B = \{a+b \mid a \in A,~ b \in B\}$. One verifies that the sum of two convex bodies (respectively convex
integral polytopes) is again a convex body (respectively a convex integral polytope).
This is the well-known Minkowski sum of convex bodies. Consider the following:
\begin{itemize}
\item[-] $\S$, the semigroup of all finite subsets of $\z^n$ with the addition of subsets.
\item[-] $\P$, the semigroup of all convex integral polytopes with the Minkowski sum.
\end{itemize}

\begin{Prop} \label{prop-7}
The semigroup $\P$ has cancelation property.
\end{Prop}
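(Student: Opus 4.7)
The plan is to reduce Minkowski-sum cancelation to ordinary additive cancelation of real-valued functions via the support function. For a nonempty compact convex set $K \subset \r^n$, define
\[ h_K(u) = \max_{x \in K} \langle u, x \rangle, \qquad u \in \r^n. \]
This maximum is finite by compactness. A one-line computation from the definition of Minkowski sum shows $h_{A+B}(u) = h_A(u) + h_B(u)$ for every $u$: maximizing the linear functional $\langle u, \cdot\rangle$ over $A+B$ decouples into maximizing independently over $A$ and $B$. Moreover, since $K$ is closed and convex, the separating hyperplane theorem yields the recovery formula
\[ K = \bigcap_{u \in \r^n} \{ x \in \r^n : \langle u, x \rangle \leq h_K(u) \}, \]
so the assignment $K \mapsto h_K$ is injective on the collection of compact convex sets.

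Given these two ingredients, the main argument is immediate. Suppose $A, B, C \in \P$ satisfy $A + C = B + C$. Passing to support functions, one obtains $h_A + h_C = h_B + h_C$ pointwise on $\r^n$. Since $h_C$ is everywhere finite, subtract to get $h_A = h_B$, and injectivity of $K \mapsto h_K$ then forces $A = B$. Note that integrality of the polytopes plays no role: cancelation in $\P$ is inherited from cancelation in the larger semigroup of all convex bodies under Minkowski sum.

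There is no serious obstacle. The argument rests on two textbook facts from convex geometry (additivity of support functions under Minkowski sum, and recovery of a convex body from its support function), both of which follow quickly from the separating hyperplane theorem. The only sanity check worth flagging is that $h_K$ takes finite real values so that the subtraction step $h_A + h_C = h_B + h_C \Rightarrow h_A = h_B$ is legitimate; this is automatic from compactness of the polytopes in $\P$.
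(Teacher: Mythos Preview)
Your proof is correct. The paper itself does not give an argument but simply states that the proposition follows from the more general fact that the semigroup of all convex bodies (not just integral polytopes) under Minkowski sum has cancelation. Your proposal supplies precisely the standard proof of that general fact via support functions, so you are filling in what the paper left as a citation rather than taking a different route.
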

Proposition \ref{prop-7} follows from the more general fact that the semigroup of convex bodies with respect to
the Minkowski sum has cancelation property.
The next statement is easy to verify:
\begin{Prop} \label{prop-8}
The map which associates to a finite nonempty set $A \subset \z^n$ its convex hull $\Delta(A)$,
is a homomorphism of semigroups from $\S$ to $\P$.
\end{Prop}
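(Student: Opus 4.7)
The plan is to verify that the assignment $A \mapsto \Delta(A)$ respects the semigroup operations, which amounts to the single identity
\[
\Delta(A+B) \;=\; \Delta(A) + \Delta(B)
\]
for any two finite nonempty sets $A, B \subset \z^n$. Once this is established, the fact that $A+B$ is finite and consists of integral points (so $\Delta(A+B) \in \P$) is automatic, and the map is manifestly well-defined on nonempty sets.

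I would prove the identity by a double inclusion. For $\Delta(A+B) \subseteq \Delta(A) + \Delta(B)$, I would note that $A+B \subseteq \Delta(A) + \Delta(B)$ directly from the definition of Minkowski sum, and then observe that $\Delta(A) + \Delta(B)$ is convex (Minkowski sum of convex sets is convex). Since $\Delta(A+B)$ is by definition the smallest convex set containing $A+B$, the inclusion follows.

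For the reverse inclusion $\Delta(A) + \Delta(B) \subseteq \Delta(A+B)$, I would take a generic element $x = \bigl(\sum_i \alpha_i a_i\bigr) + \bigl(\sum_j \beta_j b_j\bigr)$ with $a_i \in A$, $b_j \in B$, $\alpha_i, \beta_j \geq 0$, $\sum_i \alpha_i = \sum_j \beta_j = 1$. Using the identities $\sum_i \alpha_i = 1$ and $\sum_j \beta_j = 1$, I would rewrite
\[
x \;=\; \sum_{i,j} \alpha_i \beta_j (a_i + b_j),
\]
and observe that the coefficients $\alpha_i \beta_j$ are nonnegative and sum to $1$, so $x$ is a convex combination of elements of $A+B$, hence $x \in \Delta(A+B)$.

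No step should present a genuine obstacle; the only subtlety is making sure one invokes the distributive expansion above correctly, which is the standard trick for handling Minkowski sums of convex hulls of finite point sets.
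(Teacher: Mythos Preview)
Your argument is correct and is exactly the routine verification the paper has in mind: the paper does not give a proof of this proposition but simply states that it ``is easy to verify,'' and your double-inclusion argument (using convexity of the Minkowski sum for one direction and the product-of-coefficients rewriting for the other) is the standard way to carry this out.
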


For an integral convex polytope $\Delta \in \P$ let $\Delta_\z \in \S$ denote the finite set
of integral points in $\Delta$, i.e. $\Delta_\z = \Delta \cap \z^n$.
It is not hard to verify the following (see \cite{Askold-sum-of-finite-sets}):
\begin{Prop} \label{prop-9}
For any nonempty subset $A \subset \z^n$ we have:
$$A + n\Delta(A)_\z = (n+1)\Delta(A)_\z = \Delta(A)_\z + n\Delta(A)_\z.$$
\end{Prop}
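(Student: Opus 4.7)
The plan is to establish the corresponding identity at the level of convex subsets of $\r^n$, namely
$$A + n\Delta(A) \;=\; (n+1)\Delta(A) \;=\; \Delta(A) + n\Delta(A),$$
and then intersect with $\z^n$. The right-hand equality is just the familiar fact $(n+1)\Delta = \Delta + n\Delta$ for any convex set $\Delta$. For the left-hand equality, $\subset$ is immediate from $A\subset\Delta(A)$. For the reverse, I would take $q\in(n+1)\Delta(A)$, note that $q/(n+1)\in\Delta(A)=\conv(A)$, and apply Carath\'eodory's theorem in $\r^n$ to write
$$q/(n+1)=\sum_{i=0}^{n}\mu_i a_i,\qquad a_i\in A,\ \mu_i\ge 0,\ \sum_{i=0}^n\mu_i=1.$$
Since $n+1$ nonnegative numbers summing to $1$ cannot all be strictly less than $1/(n+1)$, some $\mu_{i_0}\ge 1/(n+1)$; relabel so that $\mu_0\ge 1/(n+1)$. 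Then
$$q-a_0 \;=\; \bigl((n+1)\mu_0-1\bigr)a_0 + \sum_{i=1}^{n}(n+1)\mu_i\, a_i$$
is a nonnegative combination of elements of $A$ whose coefficients sum to $n$, so $q-a_0\in n\Delta(A)$ and $q\in a_0+n\Delta(A)\subset A+n\Delta(A)$.

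The passage to lattice points is where the finite sets $\Delta(A)_\z$ enter. Since $A\subset\z^n$, the decomposition $q=a_0+(q-a_0)$ above automatically places $q-a_0$ in $\z^n$ as soon as $q\in\z^n$; therefore $q-a_0\in n\Delta(A)\cap\z^n=n\Delta(A)_\z$. Intersecting $A+n\Delta(A)=(n+1)\Delta(A)$ with $\z^n$ then yields the first equality $A+n\Delta(A)_\z=(n+1)\Delta(A)_\z$. Moreover, because in the Carath\'eodory step the chosen $a_0\in A$ already lies in $\Delta(A)_\z$, exactly the same argument gives $(n+1)\Delta(A)_\z=\Delta(A)_\z+n\Delta(A)_\z$, i.e.\ the second equality.

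The main obstacle is precisely the final intersection-with-lattice step. For arbitrary convex bodies $S,T\subset\r^n$ one only has the inclusion $(S\cap\z^n)+(T\cap\z^n)\subset(S+T)\cap\z^n$, and in general this inclusion is strict --- this is exactly the phenomenon of non-normality of lattice polytopes, which prevents a direct set-theoretic deduction. The Carath\'eodory-plus-pigeonhole mechanism is what forces equality here: it does not merely certify abstractly that every point of $(n+1)\Delta(A)$ is of the form $a_0+r$ with $a_0\in A$ and $r\in n\Delta(A)$, but actually produces $a_0$ as an explicit lattice vertex, thereby guaranteeing that the residue $r$ is automatically integral.
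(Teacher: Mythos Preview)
Your proof is correct. The paper itself does not give a proof but refers to \cite{Askold-sum-of-finite-sets}; the argument you have supplied---Carath\'eodory's theorem in $\r^n$ combined with the pigeonhole observation that one of the $n+1$ barycentric coefficients must be at least $1/(n+1)$---is precisely the standard argument used there. Your last paragraph correctly identifies the only delicate point: one cannot simply intersect a Minkowski-sum identity with $\z^n$ termwise, and it is the fact that the peeled-off summand $a_0$ is already a lattice point (indeed a vertex of $\Delta(A)$) that forces the residue $q-a_0$ to be integral. Nothing is missing.
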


We then have the following description for the Grothendieck semigroup of $\S$.
\begin{Th} \label{th-finite-sets-convex-polytope-semigps}
The Grothendieck semigroup of $\S$ is isomorphic to $\P$. The homomorphism $\rho: \S \to \P$ is
given by $\rho(A) = \Delta(A)$.
\end{Th}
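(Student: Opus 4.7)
The plan is to directly apply the universal property of the Grothendieck semigroup using Propositions \ref{prop-7}, \ref{prop-8}, and \ref{prop-9}, all of which are now in hand. First I would factor the homomorphism $\rho : \S \to \P$ of Proposition \ref{prop-8} through $\Gr(\S)$. Since $\P$ has the cancellation property by Proposition \ref{prop-7}, the universal property of the Grothendieck semigroup (reviewed in the introduction) yields a unique homomorphism $\bar\rho : \Gr(\S) \to \P$ with $\bar\rho \circ \pi = \rho$, where $\pi : \S \to \Gr(\S)$ is the canonical map. It then suffices to show $\bar\rho$ is a bijection.

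For surjectivity, given any integral convex polytope $\Delta \in \P$, the set $A = \Delta_\z = \Delta \cap \z^n$ is a finite subset of $\z^n$ whose convex hull is $\Delta$ (this is the defining property of an integral polytope). Hence $\Delta = \Delta(A) = \rho(A) = \bar\rho(\pi(A))$, so $\bar\rho$ is onto.

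For injectivity, suppose $\bar\rho(\pi(A)) = \bar\rho(\pi(B))$, i.e.\ $\Delta(A) = \Delta(B)$. Since this common polytope is an integral polytope, its set of integer points $\Delta(A)_\z = \Delta(B)_\z$ coincides, and I can set $C = n\Delta(A)_\z = n\Delta(B)_\z \in \S$. Applying Proposition \ref{prop-9} to $A$ and to $B$ separately gives
\[
A + C \;=\; A + n\Delta(A)_\z \;=\; (n+1)\Delta(A)_\z \;=\; (n+1)\Delta(B)_\z \;=\; B + n\Delta(B)_\z \;=\; B + C.
\]
Thus $A \sim B$ in $\S$, so $\pi(A) = \pi(B)$, proving injectivity of $\bar\rho$.

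There is no real obstacle here — Proposition \ref{prop-9} is doing all the work, since it provides the explicit witness $C$ to the analogy of $A$ and $B$ from the sole hypothesis that their convex hulls agree. The only point requiring a moment of care is checking that the universal factorization indeed applies: this needs $\P$ to have cancellation (Proposition \ref{prop-7}) and $\rho$ to be a semigroup homomorphism (Proposition \ref{prop-8}), both already established. The isomorphism $\bar\rho$ therefore identifies $\Gr(\S)$ with $\P$, as claimed.
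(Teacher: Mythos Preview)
Your proof is correct and follows essentially the same approach as the paper: both use Propositions \ref{prop-7} and \ref{prop-8} to factor $\rho$ through $\Gr(\S)$, and both use Proposition \ref{prop-9} to show that $\Delta(A)=\Delta(B)$ forces $A\sim B$. The only cosmetic difference is that the paper phrases the key step as $A\sim\Delta(A)_\z\sim B$ via transitivity, whereas you exhibit the single witness $C=n\Delta(A)_\z$ directly.
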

\begin{proof}
From Propositions \ref{prop-7} and \ref{prop-8} it follows that if $A \sim B$ then $\Delta(A) = \Delta(B)$.
Conversely, from Proposition \ref{prop-9} we know that $A$ and $\Delta(A)_\z$ are analogous.
By definition if $\Delta(A) = \Delta(B)$ then $\Delta(A)_\z = \Delta(B)_\z$ and hence $A$ and $B$ are analogous.
\end{proof}

\section{Part II: Horospherical homogeneous spaces} \label{part-main}
\subsection{Horospherical subgroups} \label{sec-horo-subgroup}
\begin{Def}[Horospherical subgroup]
A subgroup $H \subset G$ is called {\it horospherical} if it contains a maximal
unipotent subgroup. The corresponding homogeneous space $G/H$ is called a {\it
horospherical homogeneous space}.
\end{Def}

The horospherical spaces (respectively their partial compactifications called $S$-varieties) have
features similar to algebraic torus (respectively toric varieties).

The next theorem gives a description of the horospherical subgroups of $G$.
Recall that a subgroup $P$ of $G$ is parabolic if it contains a Borel subgroup. 

\begin{Th} \label{th-P-H}
Let $H$ be a horospherical subgroup of $G$. Then
there exists a parabolic subgroup $P$ of $G$ such that
$P' \subset H \subset P$, where $P'$ denotes the commutator subgroup of $P$.
Conversely, any closed subgroup $H$ with $P' \subset H \subset P$ is horospherical.
\end{Th}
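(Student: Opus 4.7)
The plan is to dispose of the converse direction quickly by Levi decomposition, and then, for the forward direction, to first choose $P$ as the smallest parabolic containing $H$ and prove $P' \subset H$ by a Lie-algebra extraction argument.

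\emph{Converse.} After conjugating we may assume $P \supset B$, with Levi decomposition $P = L \ltimes R_u(P)$ and $L \supset T$. The quotient $P/P' \cong L/L'$ is a torus, so $P' = L' \cdot R_u(P)$. The maximal unipotent $U$ decomposes as $U = (U \cap L) \cdot R_u(P)$, and $U \cap L$ is the maximal unipotent of the reductive group $L$, hence lies in $L'$ (every maximal unipotent of a reductive group is contained in its derived subgroup). Therefore $U \subset L' \cdot R_u(P) = P' \subset H$, so $H$ is horospherical.

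\emph{Forward.} Given $H \supset U$, any parabolic containing $H$ must contain $U$, and hence the unique Borel $B$ whose unipotent radical is $U$; so every parabolic containing $H$ is standard. Since $P_I \cap P_{I'} = P_{I \cap I'}$ for $I, I' \subset \Delta$, the intersection of all standard parabolics containing $H$ is again a standard parabolic $P = P_J$. By construction $H \subset P$, so it remains to prove $P' \subset H$. Writing $P' = L_J' \cdot R_u(P_J)$: the factor $R_u(P_J)$ sits inside $U \subset H$, and $L_J'$ is generated by the root subgroups $U_\alpha$ with $\alpha \in \Phi(L_J)$, the positive ones being in $U \subset H$. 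Thus the task reduces to showing $U_{-\alpha} \subset H$ for each simple root $\alpha \in J$.

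The main obstacle is this last step. My plan is as follows. By minimality of $P_J$, for each $\alpha \in J$ we have $H \not\subset P_{\Delta \setminus \{\alpha\}}$, so the $H$-orbit of the base point in $G/P_{\Delta \setminus \{\alpha\}}$ has positive dimension there, and $\Lie H^0$ projects nontrivially to the tangent space at the base point, which as a $T$-module is $\bigoplus_\gamma \mathfrak{g}_{-\gamma}$ summed over positive roots $\gamma$ whose $\alpha$-coefficient is strictly positive. Pick $v \in \Lie H^0$ with a nonzero $\mathfrak{g}_{-\gamma_0}$ component for some such $\gamma_0$. Since $\Lie H^0$ is closed under the adjoint action of $\mathfrak{u} \subset \Lie H^0$, and bracketing with a positive root vector $X_\beta$ sends $\mathfrak{g}_{-\gamma}$ into $\mathfrak{g}_{-(\gamma-\beta)}$ whenever $\gamma-\beta$ is still a positive root, one can iteratively bracket $v$ with carefully chosen $X_\beta$ to reduce $\gamma_0$ down to $\alpha$ along a path of roots in which each step preserves positivity of the $\alpha$-coefficient. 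This yields a nonzero element of $\mathfrak{g}_{-\alpha} \cap \Lie H^0$, and since $\mathfrak{g}_{-\alpha}$ is one-dimensional, $\mathfrak{g}_{-\alpha} \subset \Lie H^0$. Therefore $U_{-\alpha} \subset H$, completing the proof of $P' \subset H$.
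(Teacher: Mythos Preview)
Your converse argument is correct, though the paper dispatches it in one line: $U = [B,B] \subset [P,P] = P'$.

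For the forward direction, your approach (taking $P$ to be the minimal standard parabolic containing $H$ and then proving $P'\subset H$ by a Lie-algebra extraction) is genuinely different from the paper's, and it contains a real gap. After iteratively bracketing $v$ with the $X_\beta$, what you obtain is an element $v'\in\Lie H^0$ whose $\mathfrak g_{-\alpha}$-component is nonzero; you have \emph{not} produced an element of $\mathfrak g_{-\alpha}\cap\Lie H^0$. The other weight components of $v$ are carried along by the bracketing, so $v'$ will typically have nonzero pieces in $\mathfrak t$ and in various $\mathfrak g_{-\gamma}$. Since $T$ is not assumed to lie in $H$, you cannot project $\Lie H^0$ onto a single $T$-weight space, and the sentence ``since $\mathfrak g_{-\alpha}$ is one-dimensional, $\mathfrak g_{-\alpha}\subset\Lie H^0$'' does not follow. (A secondary point: the existence of the root path from $\gamma_0$ down to $\alpha$ staying inside $\{\gamma>0:c_\alpha(\gamma)>0\}$ is asserted but not proved; it is true, but it is not entirely trivial and deserves an argument.) It is plausible that a more careful argument---exploiting that $\Lie H^0$ is a Lie subalgebra and not just a $\mathfrak u$-submodule, so that commutators such as $[X_\alpha,v']$ produce elements of $\mathfrak t\cap\Lie H^0$ which can then be used---can close the gap, but as written the proof is incomplete.

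By contrast, the paper's argument avoids Lie algebras entirely. It invokes Chevalley's theorem to realize $H$ as the stabilizer of a point $[v]\in\p(V)$ for some $G$-module $V$; since $U\subset H$ has no characters, $U$ fixes $v$, so $v=\sum_i v_i$ is a sum of highest weight vectors. One then takes $P$ to be the stabilizer of $([v_1],\ldots,[v_s])\in\prod_i\p(V_{\lambda_i})$, which is parabolic because it contains $B$; clearly $H\subset P$, and $P'$ fixes each $[v_i]$ (the $\lambda_i$ are characters of $P$, hence trivial on $P'$), so $P'$ fixes $[v]$, giving $P'\subset H$. This representation-theoretic route sidesteps both the extraction problem and the root-path lemma.
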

\begin{proof}
Let $H$ be a horospherical subgroup containing a maximal unipotent subgroup $U$. By Chevalley's theorem
we can find a finite dimensional $G$-module $V$ and a vector $0 \neq v \in V$ such that $H$ is the stabilizer of
the point $[v]$ in the projective space $\p(V)$. Since $U \subset H$ and $U$ has no characters we see that
$v$ is fixed by $U$ and hence should be a sum of highest weight vectors. Let us write $v = \sum_{i=1}^s v_i$
where each $v_i$ is a highest weight vector of some weight $\lambda_i$. The Borel $B$ stabilizes the
point $x = ([v_1], \ldots, [v_s]) \in \prod_{i=1}^s \p(V_{\lambda_i})$ and hence the stabilizer subgroup $P$ of 
$x$ is a parabolic subgroup. Now since $H$ also stabilizes $x$ we have $H \subset P$ as required. 
Finally, the characters $\lambda_i$ restrict trivially to $P'$ and
thus $P'$ fixes the point $[v] \in \p(\bigoplus_{i=1}^s V_{\lambda_i})$, which proves that $P' \subset H$.
To prove the converse statement we need to show that $U \subset P'$. But $U$ is the commutator of $B$ and $B \subset P$. This finishes the proof.
\end{proof}

\subsection{Semigroup of invariant subspaces of $\c[G/P']$} \label{sec-G/P'}
Fix a Borel subgroup $B$ and let $U$ be its maximal unipotent subgroup.
One knows that there is a one-to-one correspondence between the parabolic subgroups containing
$B$ and the faces of the positive Weyl chamber $\Lambda^+_\r$.
Let $\sigma$ be a face of the positive Weyl chamber $\Lambda_\r^+$. Let $\sigma_\r$
denote the linear span of the cone $\sigma$. Also let $\Lambda_\sigma = \Lambda \cap \sigma_\r$ denote the lattice of weights
lying on $\sigma_\r$ and let $\Lambda_\sigma^+ = \Lambda^+ \cap \sigma$ be the semigroup of dominant weights lying on
the face $\sigma$. Let $P$ be the parabolic subgroup containing $B$ which corresponds to $\sigma$ and $P'$ its  commutator subgroup.
The inclusion $i: B \hookrightarrow P$ induces a map $i^*: \mathfrak{X}(P) \to \mathfrak{X}(B)=\Lambda$.
The following is well-known:
\begin{Prop} \label{prop-char-lattice-parabolic}
The map $i^*$ is an inclusion, i.e. each character of $P$ is determined by its restriction to $B$ (equivalently $T$).
Moreover, the image of $i^*$ coincides with the lattice $\Lambda_\sigma$, i.e. the characters which lie on the
linear span of the face $\sigma$. In particular, the rank of the lattice $\mathfrak{X}(P)$ is equal to the
dimension of the face $\sigma$.
\end{Prop}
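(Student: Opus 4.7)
My plan is to exploit the Levi decomposition of $P$ and reduce the statement to the well-known fact that characters of a connected reductive group inject into characters of any maximal torus. Write $P = L \ltimes R_u(P)$, where $L$ is a Levi subgroup containing $T$ and $R_u(P)$ is the unipotent radical of $P$. Since $R_u(P)$ is unipotent it admits no nontrivial characters, so restriction identifies $\mathfrak{X}(P)$ with $\mathfrak{X}(L)$.

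For injectivity of $i^*$, it then suffices to show that characters of the connected reductive group $L$ are determined by their restriction to $T$. This follows from $L = Z(L)^0 \cdot [L,L]$: the derived subgroup $[L,L]$ is semisimple and admits no nontrivial characters, so every character of $L$ factors through the torus $L/[L,L]$, onto which $T$ surjects. Composing with the identification $\mathfrak{X}(P) \cong \mathfrak{X}(L)$ gives that $i^* : \mathfrak{X}(P) \hookrightarrow \Lambda$ is injective.

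For the image, I would use the dictionary between parabolics containing $B$ and faces of $\Lambda^+_\r$. If $P = P_\sigma$ corresponds to the face $\sigma$, let $I \subset \Delta$ be the set of simple roots of the Levi $L$; by construction of this dictionary, $\sigma = \{\lambda \in \Lambda^+_\r \mid \langle \lambda, \alpha^\vee \rangle = 0 \text{ for all } \alpha \in I\}$, so the linear span $\sigma_\r$ equals $\{\lambda \in \Lambda_\r \mid \langle \lambda, \alpha^\vee \rangle = 0 \text{ for all } \alpha \in I\}$. A weight $\lambda \in \Lambda$ extends from $T$ to a character of $L$ exactly when it is invariant under the Weyl group of $L$, equivalently when it annihilates every coroot $\alpha^\vee$ for $\alpha \in I$. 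Thus the image of $i^*$ is exactly $\Lambda \cap \sigma_\r = \Lambda_\sigma$. The rank assertion is then immediate, since $\Lambda_\sigma$ is a full-rank sublattice of $\sigma_\r$ and $\sigma$ is full-dimensional in its linear span, giving $\rank \mathfrak{X}(P) = \dim \sigma_\r = \dim \sigma$.

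I do not anticipate a serious obstacle: the argument is essentially bookkeeping in standard Lie theory. The most delicate point is simply being precise about the face--parabolic dictionary, i.e.\ confirming that the simple roots of the Levi of $P_\sigma$ are exactly those vanishing on $\sigma$; once this is in place, the rest reduces to the character theory of reductive groups and the vanishing of characters on unipotent subgroups.
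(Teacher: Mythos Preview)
Your argument is correct and is essentially the standard one. The paper itself offers no proof of this proposition; it is simply declared ``well-known'' and stated without justification. So there is nothing to compare against beyond noting that your Levi-decomposition approach is exactly the textbook route one would expect.

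One small point worth making explicit, since you flag the face--parabolic dictionary as the delicate step but not this: the equivalence ``$\lambda$ extends from $T$ to $L$ $\Leftrightarrow$ $\lambda$ is $W_L$-invariant'' requires knowing that $T \cap [L,L]$ is connected (equivalently, is a maximal torus of $[L,L]$). Otherwise $\langle \lambda, \alpha^\vee\rangle = 0$ for all $\alpha \in I$ only guarantees triviality on the identity component. This connectedness is true and standard (e.g.\ $T = Z(L)^0 \cdot S$ for a maximal torus $S$ of $[L,L]$, and then $T\cap [L,L] \subset C_{[L,L]}(S) = S$), but it is the one place where a reader might pause.
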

We will identify the character group $\mathfrak{X}(P)$ with $\Lambda_\sigma$.

Consider the quotient group $S = P/P'$. By definition of $P'$, $S$ is an abelian algebraic group.
The natural projection map $\pi: P \to S$ gives a map $\pi^*: \mathfrak{X}(S) \to \mathfrak{X}(P)$.
\begin{Prop} \label{prop-P/P'}
The group $S$ is a torus of dimension equal to the dimension of the face $\sigma$.
Moreover, the map $\pi^*$ gives an isomorphism
between the character lattice of $S$ and the lattice $\Lambda_\sigma$
\end{Prop}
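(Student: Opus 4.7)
The strategy is to use the Levi decomposition of $P$, show that $S = P/P'$ is a torus, then identify its character lattice with $\Lambda_\sigma$ via Proposition \ref{prop-char-lattice-parabolic}, and finally read off the dimension of $S$ as the rank of this lattice.

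For the first step, I would write $P = L \cdot P_u$ with $L$ a Levi factor containing $T$ and $P_u$ the unipotent radical of $P$, and prove that $P_u \subset P'$ as follows. The group $P_u$ is generated by the root subgroups $U_\alpha \cong \c$ for the roots $\alpha$ of $P_u$, and $T$ acts on each $U_\alpha$ by the nontrivial character $\alpha$. Picking $t \in T$ with $\alpha(t) \neq 1$, the commutator map $x \mapsto [t,x] = (\alpha(t)-1)x$ on $U_\alpha \cong \c$ is a nonzero scalar multiplication, hence a bijection, so $U_\alpha \subset [T, P] \subset P'$. Therefore $P_u \subset P'$, and consequently $P/P' \cong L/L'$, the maximal torus quotient of the connected reductive group $L$, which is a torus.

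For the second step, injectivity of $\pi^*: \mathfrak{X}(S) \to \mathfrak{X}(P)$ is automatic since $\pi$ is surjective. For surjectivity, any character $\chi: P \to \c^*$ takes values in an abelian group and hence vanishes on $P'$, so it descends uniquely to a character of $S$. Thus $\pi^*$ is an isomorphism onto $\mathfrak{X}(P)$, which by Proposition \ref{prop-char-lattice-parabolic} is identified with $\Lambda_\sigma$. Since $S$ is a torus, $\dim(S) = \rank \mathfrak{X}(S) = \rank \Lambda_\sigma = \dim(\sigma)$, again by the same proposition.

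The only substantive ingredient is the inclusion $P_u \subset P'$, which is the standard calculation above; the identification of characters and the dimension count then follow formally from the universal property of abelianization combined with the preceding proposition.
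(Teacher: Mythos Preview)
Your argument is correct. The key computation $U_\alpha \subset [T,P] \subset P'$ via the commutator $[t,u_\alpha(c)] = u_\alpha((\alpha(t)-1)c)$ is the standard way to show $P_u \subset P'$, and the remaining steps (identifying $\mathfrak{X}(S)$ with $\mathfrak{X}(P)$ via the universal property of abelianization, then invoking Proposition \ref{prop-char-lattice-parabolic}) are routine. One small point you leave implicit: to pass from $P_u \subset P'$ to $P/P' \cong L/L'$, you need $P' = L'P_u$; the reverse inclusion $P' \subset L'P_u$ follows because the composite $P \to L \to L/L'$ is a homomorphism to an abelian group with kernel $L'P_u$.

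As for comparison with the paper: the paper does not prove this proposition at all. Both Proposition \ref{prop-char-lattice-parabolic} and Proposition \ref{prop-P/P'} are stated as known structural facts about parabolic subgroups and used without argument. So your proposal supplies a proof where the paper simply asserts the result; there is nothing to compare approaches against.
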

We will also identify $\mathfrak{X}(S)$ with $\Lambda_\sigma$.

Now consider the homogeneous space $X = G/P'$. As $P'$ is a normal subgroup of $P$, the group $P$
and hence $S = P/P'$ act on $X$ from right. Also $G$ acts on $X$ from left and the two actions
commute.

The following theorem is well-known and plays an important role for us.
\begin{Th} \label{th-G/P'}
\begin{enumerate}
\item The variety $X$ is a quasi-affine variety.
\item The ring of regular functions $\c[X]$ decomposes as:
$$\c[X] = \bigoplus_{\lambda \in \Lambda_\sigma^+} W_\lambda,$$
where $W_\lambda$ denotes the $\lambda$-eigenspace for the action of $S$.
Moreover, as a $G$-module $W_\lambda$ is isomorphic to the dual representation $V_\lambda^*$.
\item For any two dominant weights $\lambda, \mu \in \Lambda_\sigma^+$ we have
$$W_\lambda W_\mu \subset W_{\lambda + \mu}.$$
\end{enumerate}
\end{Th}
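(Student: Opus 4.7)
The plan is to treat the three parts in sequence, each reducing to standard representation-theoretic facts already built up in the paper.

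For part (1), I would embed $X=G/P'$ as a $G$-orbit in a finite-dimensional $G$-module. Choose dominant weights $\lambda_1,\dots,\lambda_s \in \Lambda_\sigma^+$ that generate the lattice $\Lambda_\sigma = \mathfrak{X}(P)$ (Proposition \ref{prop-char-lattice-parabolic}). Each $\lambda_i$ then extends to a character of $P$, so the line $\c v_{\lambda_i}\subset V_{\lambda_i}$ is $P$-stable with $P$ acting through $\lambda_i$; in particular $P'$ fixes every $v_{\lambda_i}$. Set $v = v_{\lambda_1}+\cdots+v_{\lambda_s} \in V = V_{\lambda_1}\oplus\cdots\oplus V_{\lambda_s}$. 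Then $P' \subset \textup{Stab}_G(v)$. Conversely, if $gv=v$ then matching components forces $g v_{\lambda_i}=v_{\lambda_i}$ for each $i$; the stabilizer of $\c v_{\lambda_i}$ is the parabolic $P_i\supset P$ associated to the face spanned by $\lambda_i$, so $g \in \bigcap_i P_i = P$ (since the $\lambda_i$ span $\sigma$), and $\lambda_i(g)=1$ for all $i$ then forces $g\in P'$. Thus $G/P' \hookrightarrow G\cdot v \subset V$ is a locally closed embedding into affine space, i.e.\ $X$ is quasi-affine.

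For part (2), the plan is to invoke the algebraic Peter--Weyl decomposition
\[
\c[G] \;=\; \bigoplus_{\lambda\in\Lambda^+} V_\lambda^*\otimes V_\lambda,
\]
in which the left $G$-action corresponds to the $V_\lambda^*$ factor and the right $G$-action to the $V_\lambda$ factor. Taking $P'$-invariants on the right gives $\c[G/P'] = \bigoplus_\lambda V_\lambda^*\otimes V_\lambda^{P'}$. Since $U\subset P'$ (Theorem \ref{th-P-H}), we have $V_\lambda^{P'}\subset V_\lambda^U=\c v_\lambda$, and $v_\lambda$ is $P'$-fixed precisely when the character $\lambda|_T$ extends to $P$, i.e.\ (by Proposition \ref{prop-char-lattice-parabolic}) when $\lambda\in\Lambda_\sigma$, hence $\lambda\in\Lambda_\sigma^+$. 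For such $\lambda$ the right $P$-action on $V_\lambda^*\otimes v_\lambda$ factors through $S=P/P'$ and is scalar multiplication by the character $\lambda$, so $W_\lambda := V_\lambda^*\otimes \c v_\lambda$ is the $\lambda$-eigenspace of $S$ and, as a $G$-module, is isomorphic to $V_\lambda^*$.

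Part (3) then follows immediately from the fact that eigenspaces for a group of characters multiply additively: for $f\in W_\lambda$, $g\in W_\mu$ and $s\in S$,
\[
r_s(fg) \;=\; r_s(f)\,r_s(g) \;=\; \lambda(s)\mu(s)\,fg \;=\; (\lambda+\mu)(s)\,fg,
\]
so $fg \in W_{\lambda+\mu}$. The main obstacle is the converse direction of part (1): identifying $\textup{Stab}_G(v)$ with $P'$, which requires both that $\bigcap_i P_i = P$ (because the chosen $\lambda_i$ span the face $\sigma$) and that the intersection of kernels of all characters of $P$ is precisely $P'$. Once these standard structural facts about parabolic subgroups are in hand (see \cite{Popov-Vinberg}), the remainder of the proof is an application of Peter--Weyl together with Proposition \ref{prop-char-lattice-parabolic}.
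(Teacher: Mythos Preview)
The paper does not actually prove Theorem \ref{th-G/P'}; it is stated as ``well-known'' with an implicit reference to \cite{Popov-Vinberg}, and no argument is given. So there is nothing in the paper to compare your proposal against. That said, your outline is the standard proof and is correct.

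Two places deserve one extra line of justification. In part (1), the equality $\bigcap_i P_i = P$ holds because the $\lambda_i$ span $\sigma_\r$: a simple root $\alpha$ belongs to the Levi of every $P_i$ iff $\langle \lambda_i,\alpha^\vee\rangle=0$ for all $i$, hence $\alpha^\vee$ annihilates $\sigma_\r$, which is exactly the condition that $\alpha$ already lies in the Levi of $P$. The subsequent step ``$\lambda_i(g)=1$ for all $i$ implies $g\in P'$'' uses that the $\lambda_i$ generate $\mathfrak{X}(P)=\Lambda_\sigma$ together with Proposition \ref{prop-P/P'}: since $S=P/P'$ is a torus, its characters separate points, so the common kernel of all characters of $P$ is exactly $P'$. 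In part (2), the implication ``$v_\lambda$ is $P'$-fixed $\Rightarrow \lambda\in\Lambda_\sigma$'' is the only nontrivial direction: writing $P=L\ltimes R_u(P)$ one has $P'\supset L'$, and $L'$ contains the negative root subgroups $U_{-\alpha}$ for each simple root $\alpha$ of $L$; if these fix $v_\lambda$ then $\langle\lambda,\alpha^\vee\rangle=0$ for all such $\alpha$, placing $\lambda$ on the face $\sigma$. With these details filled in, your proof is complete.
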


\begin{Def} \label{def-supp-C[X]}
\begin{enumerate}
\item Let $0 \neq f \in \c[X]$. Then we can write $f = \sum_{\lambda \in \Lambda_\sigma^+} f_\lambda$
where $f_\lambda \in W_\lambda \cong V_\lambda^*$. The {\it support of $f$} is the collection
$\supp(f)$ of all the dominant weights $\lambda$ where $f_\lambda \neq 0$. We define the support of
the $0$ function to be the empty set.

\item  Let $L \subset \c[X]$ be a $G$-invariant subspace of regular functions on $X$
(which is also automatically invariant under the right $S$-action). The {\it support of $L$}
is the collection $\supp(L)$ of dominant weights such that
$$L = \bigoplus_{\lambda \in \supp(L)} W_\lambda.$$
In other words, $\supp(L)$ is the union of all the $\supp(f)$ for $f \in L$.

\item Let $A \subset \Lambda_\sigma^+$ be a finite set. Put
$$L_A = \bigoplus_{\lambda \in A} W_\lambda.$$
In other words, $L_A$ is the collection of
all the $f \in \c[X]$ with $\supp(f) \subset A$.
By Theorem \ref{th-G/P'}, $L_A$ is a finite dimensional $(G \times S)$-invariant subspace of $\c[X]$.
\end{enumerate}
\end{Def}

\begin{Def} \label{def-moment-polytope}
For a $G$-invariant subspace $L \subset \c[X]$ we denote the convex hull of $\supp(L)$ by
$\Delta(L)$ and call it the {\it moment polytope of $L$}.
\end{Def}

\begin{Def} \label{def-semigp-inv-subspaces-C[X]}
We denote the collection of all the finite dimensional subspaces of $\c[X]$
which are invariant under the left $G$ action by $\K_G(X)$.
\end{Def}
The set $\K_G(X)$ is a semigroup under the
product of subspaces. Moreover, if $L \in \K_G(X)$ is a $G$-invariant subspace then
its integral closure $\overline{L}$ is also $G$-invariant i.e. $\overline{L} \in \K_G(X)$.

The next proposition follows immediately from Theorem \ref{th-G/P'}.
\begin{Prop} \label{prop-support-additive-C[X]}
\begin{enumerate}
\item Let $L_1, L_2 \in \K_G(X)$ be two $G$-invariant subspaces. Then
$$\supp(L_1 L_2) = \supp(L_1) + \supp(L_2),$$ and hence
$$\Delta(L_1L_2) = \Delta(L_1) + \Delta(L_2).$$
\item Let $A_1, A_2 \subset \Lambda_\sigma^+$ be finite subsets. Then
$$L_{A_1+A_2} = L_{A_1} L_{A_2}.$$
\end{enumerate}
\end{Prop}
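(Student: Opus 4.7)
The plan is to reduce everything to the decomposition in Theorem \ref{th-G/P'} and to the fact that each $W_\lambda \cong V_\lambda^*$ is an irreducible $G$-module. First I would observe that any $G$-invariant subspace $L \subset \c[X]$ splits as $L = \bigoplus_{\lambda \in \supp(L)} W_\lambda$: indeed, since the $W_\lambda$ are pairwise non-isomorphic irreducible $G$-modules, the projection of $L$ onto each isotypic component $W_\lambda$ must either be $0$ or all of $W_\lambda$, and $\supp(L)$ is exactly the set of $\lambda$'s where the projection is nonzero.

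Next I would prove part (1). Write $A_i = \supp(L_i)$, so $L_1 L_2$ is the linear span of products $fg$ with $f \in W_\lambda$, $g \in W_\mu$ for $\lambda \in A_1$, $\mu \in A_2$. By Theorem \ref{th-G/P'}(3), each such product lies in $W_{\lambda + \mu}$, giving the inclusion $\supp(L_1 L_2) \subset A_1 + A_2$. For the reverse inclusion, the key point is that for each pair $(\lambda, \mu)$ the subspace $W_\lambda W_\mu$ is a $G$-invariant subspace of the irreducible module $W_{\lambda+\mu}$, hence is either $0$ or all of $W_{\lambda+\mu}$. To rule out the zero case, pick nonzero $f \in W_\lambda$, $g \in W_\mu$; since $X$ is irreducible the ring $\c[X]$ is a domain, so $fg \neq 0$. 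Therefore $W_\lambda W_\mu = W_{\lambda + \mu}$ for every $\lambda \in A_1$, $\mu \in A_2$, and consequently $\supp(L_1 L_2) = A_1 + A_2$. The equality $\Delta(L_1 L_2) = \Delta(L_1) + \Delta(L_2)$ then follows from the elementary fact that convex hull commutes with Minkowski sum.

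Part (2) is then formal: by definition $L_{A_i} = \bigoplus_{\lambda \in A_i} W_\lambda$, so $L_{A_1} L_{A_2}$ is the span of all $W_\lambda W_\mu$ with $\lambda \in A_1$, $\mu \in A_2$. Using the identity $W_\lambda W_\mu = W_{\lambda + \mu}$ established above, this span equals $\bigoplus_{\nu \in A_1 + A_2} W_\nu = L_{A_1 + A_2}$.

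The only nontrivial point is the identity $W_\lambda W_\mu = W_{\lambda + \mu}$, and even this is easy once the two ingredients (irreducibility of $W_{\lambda+\mu}$, which is given, and nonvanishing of the product, which comes from $\c[X]$ being a domain) are in place. So I would not expect any real obstacle; the proposition is essentially a bookkeeping consequence of Theorem \ref{th-G/P'} together with irreducibility of $V_\lambda^*$.
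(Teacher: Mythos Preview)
Your argument is correct and is exactly the unpacking of what the paper leaves implicit: the paper simply states that the proposition ``follows immediately from Theorem \ref{th-G/P'}'' and gives no further details. Your key step $W_\lambda W_\mu = W_{\lambda+\mu}$ (using irreducibility of $W_{\lambda+\mu}$ and the fact that $\c[X]$ is an integral domain) is precisely the content behind that word ``immediately,'' so there is nothing to compare.
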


Let $\mathcal{S}(\Lambda_\sigma^+)$ denote the semigroup of all finite subsets of
$\Lambda_\sigma^+$ together with the operation of addition of subsets.
Also let $\mathcal{P}(\Lambda_\sigma^+)$ be the semigroup of all convex polytopes in $\sigma$
with vertices in $\Lambda_\sigma^+$ together with the Minkowski sum of convex sets. By Theorem
\ref{th-finite-sets-convex-polytope-semigps}, the map $A \mapsto \Delta(A)$, the convex hull of $A$,
gives an isomorphism between the Grothendieck semigroup of $\mathcal{S}(\Lambda_\sigma^+)$ and the semigroup
$\mathcal{P}(\Lambda_\sigma^+)$.

The following theorem is a corollary of Proposition \ref{prop-support-additive-C[X]}
\begin{Th}[Semigroup of invariant subspaces] \label{th-semi-gp-inv-subspace-G/P'}
\begin{enumerate}
\item The map $L \mapsto \supp(L)$ gives an isomorphism of the semigroup $\K_G(X)$ of invariant subspaces
of regular functions and the semigroup $\mathcal{S}(\Lambda_\sigma^+)$ of finite subsets of $\Lambda_\sigma^+$.

\item The map $L \mapsto \Delta(L)$ gives an isomorphism of the Grothendieck semigroup of $\K_G(X)$ and the
semigroup $\mathcal{P}(\Lambda_\sigma^+)$ of convex lattice polytopes in $\sigma$.

\item If $L \in \K_G(X)$, the completion $\ol{L}$ is given by
$$\ol{L} = \bigoplus_{\lambda \in \Delta(L) \cap \Lambda_\sigma^+} W_\lambda.$$ Thus under the isomorphism
in the part (1), $\ol{L}$ corresponds to the finite set of all the dominant weights in the moment polytope $\Delta(L)$.
\end{enumerate}
\end{Th}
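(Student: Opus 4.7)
The plan is to prove (1) directly from complete reducibility of $G$-modules together with the decomposition in Theorem \ref{th-G/P'}, deduce (2) formally by applying the Grothendieck semigroup functor and combining with Theorem \ref{th-finite-sets-convex-polytope-semigps}, and obtain (3) by transferring the set-level stabilization statement of Proposition \ref{prop-9} through the isomorphism of (1).

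For (1), every $L \in \K_G(X)$ should be a direct sum of some of the irreducible $G$-modules $W_\lambda$. Theorem \ref{th-G/P'} gives a multiplicity-free decomposition $\c[X] = \bigoplus_{\lambda \in \Lambda_\sigma^+} W_\lambda$ of $\c[X]$ into pairwise non-isomorphic irreducible $G$-modules, so complete reducibility forces $L = \bigoplus_{\lambda \in \supp(L)} W_\lambda = L_{\supp(L)}$, and the support is finite since $L$ is finite dimensional. This establishes the bijection $L \mapsto \supp(L)$. To show it is a semigroup homomorphism I would verify $\supp(L_1L_2) = \supp(L_1)+\supp(L_2)$. The inclusion $\subset$ is immediate from part (3) of Theorem \ref{th-G/P'}. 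For the reverse inclusion, fix $\lambda \in \supp(L_1)$, $\mu \in \supp(L_2)$ and choose highest weight vectors $\phi_\lambda \in W_\lambda$, $\phi_\mu \in W_\mu$; since $X$ is irreducible, $\c[X]$ is an integral domain, so $\phi_\lambda\phi_\mu$ is a nonzero element of $W_\lambda W_\mu \subset W_{\lambda+\mu}$. Since $W_\lambda W_\mu$ is $G$-invariant and nonzero inside the irreducible module $W_{\lambda+\mu}$, it must equal $W_{\lambda+\mu}$, so $\lambda+\mu \in \supp(L_1L_2)$.

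Statement (2) is then formal: applying the Grothendieck semigroup functor to (1) yields $\Gr(\K_G(X)) \cong \Gr(\mathcal{S}(\Lambda_\sigma^+))$, and composing with the isomorphism $\Gr(\mathcal{S}(\Lambda_\sigma^+)) \cong \mathcal{P}(\Lambda_\sigma^+)$ from Theorem \ref{th-finite-sets-convex-polytope-semigps} gives the desired map, which at the level of representatives sends $L$ to $\Delta(\supp(L)) = \Delta(L)$.

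For (3), set $A = \supp(L)$. By Proposition \ref{prop-9}, $A$ and $\Delta(A)_\z$ are analogous in $\mathcal{S}(\Lambda_\sigma^+)$, so by (1) the subspaces $L = L_A$ and $L_{\Delta(A)_\z}$ are analogous in $\K_G(X)$, hence in $\K(X)$. Proposition \ref{prop-1}(3) then yields $L_{\Delta(A)_\z} \subset \ol{L}$. For the reverse inclusion, I would use the fact (stated in the paper just before Proposition \ref{prop-support-additive-C[X]}) that $\ol{L}$ is $G$-invariant, so by (1) there is a finite $B \subset \Lambda_\sigma^+$ with $\ol{L} = L_B$; since $L_B \sim L_A$, part (1) transports this to $B \sim A$ in $\mathcal{S}(\Lambda_\sigma^+)$, and the easy direction of Theorem \ref{th-finite-sets-convex-polytope-semigps} (Propositions \ref{prop-7}, \ref{prop-8}) gives $\Delta(B) = \Delta(A)$. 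Thus $B \subset \Delta(A) \cap \Lambda = \Delta(A)_\z$, using that $\sigma$ is a face of the positive Weyl chamber so every lattice point of $\sigma$ automatically lies in $\Lambda_\sigma^+$. This forces $\ol{L} = L_{\Delta(A)_\z}$. The main technical input is the Cartan-type identity $W_\lambda W_\mu = W_{\lambda+\mu}$ in (1); the one subtle point to double-check is that analogousness inside $\K_G(X)$ agrees with analogousness inside the larger semigroup $\K(X)$, which is guaranteed once we know $\ol{L}$ is $G$-invariant.
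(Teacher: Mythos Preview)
Your arguments for (1) and (2) are correct and coincide with the paper's approach: the paper simply declares the theorem a corollary of Proposition \ref{prop-support-additive-C[X]} and gives no further details, so your write-up (including the Cartan-type identity $W_\lambda W_\mu = W_{\lambda+\mu}$ via irreducibility and the integral domain property) is in fact more explicit than the paper.

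For (3), you have correctly located the delicate point, but the sentence ``which is guaranteed once we know $\ol{L}$ is $G$-invariant'' does not close the gap. Knowing that $\ol{L_A}=L_B$ lies in $\K_G(X)$ only tells you $L_B\sim L_A$ in $\K(X)$; to invoke the isomorphism of (1) and conclude $B\sim A$ in $\mathcal{S}(\Lambda_\sigma^+)$ you would need $L_B\sim L_A$ in $\K_G(X)$, i.e.\ a witness $M$ that is itself $G$-invariant, and nothing you have written produces such an $M$. The paper does not spell this out either, so this is a genuine point to fill in.

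A clean fix uses the $S$-grading directly rather than transporting $\sim$. Since $X=G/P'$ is smooth, $\c[X]$ is normal, so any $f$ integral over $L_A\subset\c[X]$ lies in $\c[X]$; thus $\ol{L_A}\subset\c[X]$. Take $\mu\in B=\supp(\ol{L_A})$ and $0\neq f\in W_\mu$ with an integral relation $f^m+\sum_{i=1}^m a_i f^{m-i}=0$, $a_i\in L_A^i=L_{iA}$. Projecting to the $W_{m\mu}$-component (using that $f^{m-i}\in W_{(m-i)\mu}$) gives $f^m+\sum_i (a_i)_{i\mu}\,f^{m-i}=0$, where $(a_i)_{i\mu}$ is the $W_{i\mu}$-component of $a_i$. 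As $\c[X]$ is a domain, $f^m\neq 0$, so some $(a_i)_{i\mu}\neq 0$, i.e.\ $i\mu$ lies in the $i$-fold sumset $iA$. Writing $i\mu=\alpha_1+\cdots+\alpha_i$ with $\alpha_j\in A$ shows $\mu=\tfrac{1}{i}\sum\alpha_j\in\Delta(A)$, hence $\mu\in\Delta(A)\cap\Lambda_\sigma^+$. This gives $B\subset\Delta(A)_\z$ and completes (3).
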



According to the {\it Weyl dimension formula} the dimension of an irreducible representation
$V_\lambda$ is equal to $F(\lambda)$,
where $F$ is a  polynomial on $\r^r$ of degree $(d-r)/2$ defined explicitly in terms of data associated to the Weyl group $W$. We call $F$ the {\it Weyl polynomial of $W$}. Let $F_\sigma$ denote the restriction of $F$ to the linear span $\sigma_\r$ of the face $\sigma$, and let $\phi_\sigma$ denote the homogeneous component of $F_\sigma$ of highest degree.

\begin{Cor}[Intersection index] \label{cor-int-index-subspace}
Let $L_1, \ldots, L_p \in \K_G(X)$ be $G$-invariant subspaces where $p = \dim(X)$. For
each $i$, let $\Delta_i = \Delta(L_i)$ be the moment polytope of the subspace $L_i$. We have
$$[L_1, \ldots, L_p] = p! I\phi_\sigma(\Delta_1, \ldots, \Delta_p),$$ where
$I\phi_\sigma$ is the mixed integral (Section \ref{sec-mixed-vol}).
\end{Cor}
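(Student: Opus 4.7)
The plan is to reduce both sides to the diagonal by polarization, then compute the resulting self-intersection via the Hilbert function from Theorem \ref{th-intersec-index-Hilbert-function}. Equation (\ref{equ-*}) shows that $[L_1, \ldots, L_p]$ is additive in each semigroup argument $L_i$, so it factors through the Grothendieck semigroup $\Gr(\K_G(X))$; by Theorem \ref{th-semi-gp-inv-subspace-G/P'}(2) the latter is isomorphic via $L \mapsto \Delta(L)$ to the semigroup of lattice polytopes in $\sigma$, and hence extends to a symmetric $p$-linear form on the vector space $\V(\sigma_\r)$ of virtual polytopes parallel to $\sigma_\r$. The mixed integral $I\phi_\sigma$ is by construction the polarization of $\Delta \mapsto \int_\Delta \phi_\sigma\, d\gamma$, another symmetric $p$-linear form on $\V(\sigma_\r)$. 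It therefore suffices to verify the diagonal identity
$$[L, \ldots, L] = p! \int_{\Delta(L)} \phi_\sigma\, d\gamma$$
for each $L \in \K_G(X)$.

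To establish this, Theorem \ref{th-intersec-index-Hilbert-function} reduces the left-hand side to $p!\, a(L)$ where $a(L) = \lim_{k \to \infty} \dim(\ol{L^k})/k^p$. By Proposition \ref{prop-support-additive-C[X]} one has $\Delta(L^k) = k\Delta(L)$, and Theorem \ref{th-semi-gp-inv-subspace-G/P'}(3) then gives
$$\ol{L^k} = \bigoplus_{\lambda \in k\Delta(L) \cap \Lambda_\sigma^+} W_\lambda.$$
Since $W_\lambda \cong V_\lambda^*$, the Weyl dimension formula yields $\dim W_\lambda = F(\lambda)$, whence
$$\dim \ol{L^k} = \sum_{\lambda \in k\Delta(L) \cap \Lambda_\sigma^+} F(\lambda).$$

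Let $s = \dim \sigma$ and $m = \deg \phi_\sigma$. Inspecting the Weyl product formula shows that $m$ equals the number of positive roots not vanishing on $\sigma_\r$, i.e.\ those outside the Levi of $P$, so $m = \dim G/P$. Combined with $\dim(P/P') = s$ from Proposition \ref{prop-P/P'}, this gives $s + m = \dim G - \dim P' = \dim X = p$. Normalizing $d\gamma$ on $\sigma_\r$ so that $\Lambda_\sigma$ has covolume one, and expanding $F(k\mu) = k^m \phi_\sigma(\mu) + O(k^{m-1})$ after the substitution $\lambda = k\mu$, a standard Riemann-sum estimate yields
$$\lim_{k \to \infty} \frac{1}{k^p} \sum_{\lambda \in k\Delta(L) \cap \Lambda_\sigma^+} F(\lambda) = \int_{\Delta(L)} \phi_\sigma \, d\gamma,$$
which gives the diagonal identity and, after polarization, the corollary.

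The main obstacle is the degree/dimension bookkeeping $s+m=p$: this is precisely what forces $H_L(k)$ to have growth of order exactly $k^p$ so that Theorem \ref{th-intersec-index-Hilbert-function} applies cleanly. A secondary but routine subtlety is the Riemann-sum limit when the integrand $F(\lambda)$ itself scales with $k$, where one must balance the leading factor $k^m$ against the $k^{-s}$ lattice-point density in order to recover precisely the top-degree part $\phi_\sigma$ inside the integral; lower-dimensional polytopes $\Delta(L)$ are handled uniformly since both sides then vanish.
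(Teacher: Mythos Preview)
Your argument is correct and follows exactly the route the paper intends (the corollary is stated without proof): compute the self-intersection via the Hilbert function using Theorem \ref{th-intersec-index-Hilbert-function}, evaluate $\dim\ol{L^k}$ from Theorem \ref{th-semi-gp-inv-subspace-G/P'}(3) together with the Weyl dimension formula, pass to $\int_{\Delta(L)}\phi_\sigma\,d\gamma$ by a Riemann-sum limit, and then polarize using multilinearity of the intersection index. The dimension bookkeeping $s+m=p$ you carry out is precisely the compatibility needed so that both $I\phi_\sigma$ and the intersection index are $p$-linear, and it is correct.
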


\begin{Rem}
Note that each $L_i$ is a subspace of regular functions and hence elements of the $L_i$ do not have poles.
Also as each $L_i$ is $G$-invariant, the base locus of $L_i$ (i.e. where all the elements of $L_i$ vanish)
is $G$-invariant. But $G$ acts transitively on $X$ and $L_i \neq \{0\}$, it follows that $L_i$ has
no base locus. That is, the intersection index $[L_1, \ldots, L_p]$ counts the number of solutions of a
generic system in the whole $X$ (see Definition \ref{def-int-index}).
\end{Rem}

\subsection{Semigroup of $G$-invariant linear systems on $G/H$} \label{sec-G/H}
Fix a Borel subgroup $B$ with a maximal unipotent subgroup $U$.
Let $H$ be a subgroup of $G$ containing $U$ (i.e. $H$ is a horospherical subgroup).
In this section we consider the horospherical homogeneous space $Y = G/H$.
We describe the semigroup of invariant linear systems on $Y$ and its Grothendieck semigroup as well as
the intersection index of such linear systems.

From Theorem \ref{th-P-H} we know that there exists a parabolic subgroup $P$ containing $B$
such that $P' \subset H \subset P$. Let $\sigma$ be the face of positive Weyl chamber corresponding to
the parabolic subgroup $P$.

The inclusion $i: H \hookrightarrow P$ induces a
restriction map $i^*: \mathfrak{X}(P) \to \mathfrak{X}(H)$. As in Proposition
\ref{prop-char-lattice-parabolic} identify $\mathfrak{X}(P)$ with the
lattice $\Lambda_\sigma$ and let $\Lambda(H) \subset \Lambda_\sigma$ be the
kernel of the map $i^*$. Alternatively, $H/P'$ is a subgroup of the torus $S = P/P'$ and
$\Lambda(H)$ can be viewed as the kernel of the restriction map $\mathfrak{X}(S) \to \mathfrak{X}(H/P')$.
Also let $\Lambda_\r(H) = \Lambda(H) \otimes \r$ denote the linear span of the lattice $\Lambda(H)$.

Let $D$ be a divisor on $X$ and
$\{D + (f) \mid f \in L\}$ be a family of equivalent divisors (i.e a linear system) on $X$ where $L$ is a finite dimensional
subspace of rational functions. Let us assume that the family is invariant under the action of $G$, i.e. for
each $g \in G$ and $f \in L$ we have $(g \cdot D) + (g \cdot f) = D + (h)$ for some $h \in L$.
If we assume that the only regular nowhere zero functions on $X$ are constants then
the principal divisor $(h)$ determines $h$ up to a constant. One verifies that $g: f \mapsto h$ gives a
projective representation of the group $G$ in the projective space $\p(L)$.
For simplicity let us assume that this lifts to a linear representation of $G$ on $L$. Thinking of
a linear system as a subspace of sections of the line bundle $\L$ (associated to the divisor $D$) we have
the following definition.
(Recall that a $G$-linearized line bundle $\L$ on $Y$ is a line bundle $\L$ with an action of $G$ on $\L$ extending its
action on $Y$ such that for any $x \in X$ the action of $g \in G$ maps the fibre $\L_x$ linearly to the fibre
$\L_{g \cdot x}$.)
\begin{Def}[$G$-invariant linear system] \label{def-G-lin-system}
We call a $G$-invariant finite dimensional subspace
$E \subset H^0(Y, \L)$, a {\it $G$-invariant linear system on $Y$}.
We denote the collection of all such pairs $(E, \L)$ (up to isomorphism)
by $\tilde{\K}_G(Y)$.
\end{Def}

The product of two $G$-invariant linear systems is again invariant and hence
the set $\tilde{\K}_G(Y)$ is a semigroup with respect to the product of linear systems.
Moreover, if $(E, \L) \in \tilde{\K}_G(Y)$ then
the completion $\overline{E}$ is $G$-invariant i.e. $(\overline{E}, \L) \in \tilde{\K}_G(Y)$.

\begin{Def}[Support of an invariant linear system] \label{def-support-moment-polytope-G-lin-system}
Let $E$ be a $G$-invariant linear system on $Y$. The {\it support of
$E$} is the set $\supp(E)$ of all dominant weights $\lambda \in \Lambda_\sigma^+$ for which
$V_\lambda^*$ appears in the decomposition of $E$ into irreducible $G$-modules.
\end{Def}

\begin{Def}[Newton polytope of a linear system]
Let $E$ be a $G$-invariant linear system.
We call the convex hull of $\supp(E)$, the {\it moment polytope of $E$} and denote it by $\Delta(E)$.
\end{Def}

Consider the natural projection $\pi: X=G/P' \to Y=G/H$. We would like to look at the pull-back $\pi^*(E)$ of a
$G$-invariant linear system $E$ on $Y$ to $X$.

Without loss of generality we can assume that every hypersurface in $G$ is given by an equation, that is,
$\Pic(G) = \{0\}$. In fact, by a theorem of Popov (see \cite{Popov}) for any connected linear algebraic group $G$
there exists a central isogeny $\pi: \tilde{G} \to G$ such that $\Pic(\tilde{G}) = \{0\}$. (That is, $\pi$ is an
algebraic homomorphism such that $\ker(\pi)$ is finite and lies in the center of $G$.)
Now if $U$ is a maximal unipotent subgroup of $G$ then $\pi^{-1}(U)$ is also a maximal unipotent subgroup of $\tilde{G}$. Thus
replacing $G$ with $\tilde{G}$ we can assume that the Picard group of $G$ is trivial.
The next theorem is an immediate corollary of another result in \cite{Popov}:

\begin{Th}
The character group $\mathfrak{X}(P')$ is trivial and hence any $G$-linearized
line bundle on $X=G/P'$ is $G$-equivariantly isomorphic to the trivial bundle where $G$ acts on each fibre trivially.
\end{Th}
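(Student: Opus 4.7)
The plan is to reduce the statement to two inputs: the triviality of $\mathfrak{X}(P')$, and the standard classification of $G$-linearized line bundles on a homogeneous space $G/H$ in terms of characters of $H$. Together they force every $G$-linearized line bundle on $X = G/P'$ to be $G$-equivariantly trivial, with trivial action on fibres.

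To establish $\mathfrak{X}(P') = 0$, I would use the Levi decomposition $P = L \ltimes R_u(P)$, where $L$ is a Levi subgroup containing $T$ and $R_u(P)$ is the unipotent radical of $P$. For each root $\alpha$ occurring in $R_u(P)$ and any $t \in T$ with $\alpha(t) \neq 1$ one has $[t, U_\alpha] = U_\alpha$, so every root subgroup of $R_u(P)$ lies in $P' = [P,P]$; hence $R_u(P) \subset P'$. Combined with the obvious inclusion $[L,L] \subset P'$, this gives $P' = [L,L] \cdot R_u(P)$. The group $[L,L]$ is semisimple hence perfect, so every character kills it; and $R_u(P)$ is unipotent, so it admits no nontrivial characters. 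Any character of $P'$ therefore restricts trivially to both generating subgroups, and is itself trivial.

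For the second assertion, by the reduction in the paragraph preceding the theorem we may assume $\Pic(G) = 0$. Popov's theorem then asserts that for any closed connected subgroup $H \subset G$, the map $\chi \mapsto G \times^{H} \mathbb{C}_\chi$ induces a surjection from $\mathfrak{X}(H)$ onto the group of isomorphism classes of $G$-linearized line bundles on $G/H$. Applied with $H = P'$ and combined with $\mathfrak{X}(P') = 0$, this forces every $G$-linearized line bundle on $X = G/P'$ to be $G$-equivariantly isomorphic to $X \times \mathbb{C}$ with trivial action on the fibre.

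The main obstacle is not the computation $\mathfrak{X}(P') = 0$, which is routine via the Levi decomposition, but rather accessing the precise form of Popov's classification of $G$-linearized line bundles when $\Pic(G) = 0$; once that result is cited as a black box, both halves of the theorem fall out immediately.
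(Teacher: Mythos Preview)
Your proposal is correct and aligns with the paper's approach: the paper does not give a self-contained proof but simply states that the theorem is an immediate corollary of a result in \cite{Popov}, i.e.\ exactly the classification $\Pic_G(G/H) \cong \mathfrak{X}(H)$ (under the standing assumption $\Pic(G)=0$) that you invoke as your second input. Your Levi-decomposition computation of $\mathfrak{X}(P')=0$ is a clean way to supply the other half, which the paper leaves implicit; the argument that $R_u(P)\subset P'$ via $[T,U_\alpha]=U_\alpha$ and that $[L,L]$ is perfect and $R_u(P)$ unipotent is standard and sound. One minor remark: Popov's map $\chi \mapsto G\times^{H}\mathbb{C}_\chi$ is in fact an isomorphism (not just a surjection) under the hypothesis $\Pic(G)=0$, as the paper itself uses later in the proof of Theorem~\ref{th-E-L(E)}; but surjectivity is all you need here.
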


Let $\L$ be a $G$-linearized line bundle on $Y$. Then by the above theorem the pull-back line bundle
$\pi^*(\L)$ is trivial. Thus we can identify the space of sections $H^0(X, \pi^*(\L))$ with the
ring of regular functions $\c[X]$. Note that since $\pi: X \to Y$ is surjective, the map
$\pi^*: H^0(Y, \L) \to H^0(X, \pi^*(\L))\cong \c[X]$ is one-to-one.
The map $\pi^*$ then identifies
a $G$-invariant linear system $E$ on $Y$ with a $G$-invariant subspace $L(E) \subset \c[X]$.
It is clear that $\supp(E) = \supp(L(E))$.

\begin{Th} \label{th-E-L(E)}
The map $E \mapsto L(E)$ gives a one-to-one correspondence between the collection of $G$-invariant linear systems on $Y$ (up to isomorphism) and the collection of finite dimensional subspaces of $\c[X]$ which are invariant under the left action of $G$ and lie in an eigenspace for the right action of $H$.
\end{Th}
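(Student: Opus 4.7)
The plan is to explicitly construct an inverse map $L \mapsto E(L)$ and verify that both compositions are the identity. The key ingredient is the classical correspondence between $G$-linearized line bundles on $Y = G/H$ and characters of $H$: each such $\L$ is determined up to $G$-equivariant isomorphism by a character $\chi \in \mathfrak{X}(H)$, and its space of sections is identified with
\begin{equation*}
H^0(Y,\L) \;\cong\; \{\, f \in \c[G] : f(gh) = \chi(h)^{-1} f(g) \text{ for all } g \in G,\, h \in H \,\}.
\end{equation*}

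First I would check that $L(E) = \pi^*(E)$ has the two listed properties. Finite-dimensionality and left $G$-invariance are immediate from $\pi$ being a $G$-equivariant surjection and $E$ being a finite-dimensional $G$-invariant subspace. For the eigenspace property, sections of $\L$ pull back to $\chi$-semi-invariants on $G$ under the right $H$-action; since the theorem recalled just above gives $\mathfrak{X}(P') = 0$, the restriction $\chi|_{P'}$ is trivial, so these functions descend to $\c[X] = \c[G]^{P'}$ and $L(E)$ lies in the $\chi$-eigenspace for the induced right $H/P'$-action.

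Next I would construct the inverse map. Given $L \subset \c[X]$ that is left $G$-invariant and lies in the $\chi$-eigenspace for some character $\chi$ of $H/P'$, extend $\chi$ to a character $\tilde\chi$ of $H$ by setting $\tilde\chi|_{P'} = 1$; this defines a $G$-linearized line bundle $\L_{\tilde\chi}$ on $Y$. Viewing $L \subset \c[G]^{P'}$ as a space of $\tilde\chi$-semi-invariants under the right $H$-action, it coincides with the pullback of a unique finite-dimensional $G$-invariant subspace $E(L) \subset H^0(Y, \L_{\tilde\chi})$, and this $E(L)$ is the desired $G$-invariant linear system on $Y$.

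The two maps are inverse to each other essentially by construction. The pullback $\pi^*$ is injective since $\pi$ is surjective, which gives $\pi^*(E(L)) = L$. Conversely, applying the forward map to $E(L)$ recovers the same weight $\tilde\chi$ and hence the same $G$-linearized line bundle, so $E(L(E)) = E$ at the level of linear systems up to isomorphism; the uniqueness of a $G$-equivariant trivialization of $\pi^*\L$ (up to scalar, since $\c[X]^G = \c$) ensures that $L(E)$ is an unambiguously defined subspace. The main technical obstacle I expect is the bookkeeping: carefully tracking which of $G$, $H$, $P'$ acts on which side and verifying that right $P'$-invariance, right $\tilde\chi$-semi-invariance for $H$, and left $G$-invariance are mutually compatible, so that the descent of the $\chi$-semi-invariant description of $H^0(Y,\L)$ to a subspace of $\c[X]$ really does yield the claimed bijection.
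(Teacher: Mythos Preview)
Your proposal is correct and follows essentially the same approach as the paper: both rely on the classification of $G$-linearized line bundles on $G/H$ by characters $\chi \in \mathfrak{X}(H)$ (attributed in the paper to Popov), the identification of $H^0(Y,\L_\chi)$ with $\chi$-semi-invariants in $\c[G]$ for the right $H$-action, and the triviality of $\mathfrak{X}(P')$ to pass from $\c[G]$ to $\c[G/P']$. Your write-up is somewhat more explicit than the paper's in spelling out the inverse map and verifying both compositions, but the content is the same.
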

\begin{proof}
Each character $\gamma \in \mathfrak{X}(H)$ gives a $G$-linearized line bundle $\L_\gamma$ on $Y = G/H$ defined by
$\L_\gamma = (G \times \c)/H$ where $h \in H$ acts on $(g, x) \in G \times \c$ by $h \cdot (g, x) = (gh^{-1}, \gamma(h)x)$.
The fibration $\L_\gamma \to G/H$ is given by the projection on first factor.
From definition, each holomorphic section of $\L_\gamma$ corresponds to a section of the
trivial bundle over $G$ which is invariant under the above action of $H$.
It follows that the holomorphic sections of $\L_\gamma$ are in one-to-one correspondence with regular functions in $\c[G]$ which are $\gamma$-eigenfunctions for the right action of $H$.
By a theorem of Popov (\cite{Popov}) the correspondence $\gamma \mapsto \L_\gamma$ is an isomorphism of
$\mathfrak{X}(H)$ and $\Pic_G(G/H)$, the group of $G$-linearized line bundles on $G/H$ (with tensor product).
Thus if $E \subset H^0(Y, \L_\gamma)$ is an invariant linear system where
$\L$ is a $G$-linearized line bundle, then for some character $\gamma \in \mathfrak{X}(H)$ we have $\L = \L_\gamma$ and
$E$ can be identified with a (left) $G$-invariant subspace of $\gamma$-eigenspace of $\c[G]$ for the right action of $H$. Also as $P'$ has no characters, each $\gamma$-eigenfunction is $P'$-invariant and hence belongs to $\c[G/P']$. This proves the proposition.
\end{proof}

The next proposition describes the support of the subspace $L(E)$ associated to an invariant system $E$.
\begin{Prop} \label{prop-supp-parallel-Lambda-H}
\begin{enumerate}
\item Let $\gamma \in \mathfrak{X}(H)$ be a character of $H$.
Let $L$ be a (left) $G$-invariant subspace of $\c[G/P']$ consisting of $\gamma$-eigenfunctions of (right) action
 of $H$. Then $\supp(L)$ is contained in a coset of $\Lambda(H)$, i.e. for any $\lambda, \mu \in \supp(L)$
we have $\lambda - \mu \in \Lambda(H)$. In particular, the smallest affine space spanned by $\supp(L)$
is parallel to the linear space $\Lambda_\r(H)$.
\item Conversely, let $A \subset \Lambda_\sigma^+$ be a finite subset which is contained in a
coset of $\Lambda(H)$. Then the subspace $L_A = \bigoplus_{\lambda \in A} W_\lambda \subset \c[G/P']$ is contained in some eigenspace of (right) action of $H$.
\end{enumerate}
\end{Prop}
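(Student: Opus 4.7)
The plan is to reduce everything to the right action of the torus $S = P/P'$ on $X = G/P'$ and exploit the weight-space decomposition $\c[X] = \bigoplus_{\lambda \in \Lambda_\sigma^+} W_\lambda$ from Theorem \ref{th-G/P'}. Because $P'$ is normal in $P$ and contained in $H$, the right $P$-action on $X$ descends to an $S$-action, and the right $H$-action on $\c[X]$ factors through $H/P' \hookrightarrow S$. Identifying $\mathfrak{X}(S) \cong \Lambda_\sigma$ as in Propositions \ref{prop-char-lattice-parabolic} and \ref{prop-P/P'}, the restriction map $\mathfrak{X}(S) \to \mathfrak{X}(H/P')$ is surjective with kernel $\Lambda(H)$, so characters of $H$ are in bijection with cosets of $\Lambda(H)$ in $\Lambda_\sigma$.

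The crucial computation is then that $W_\lambda$ is, by construction, the $\lambda$-eigenspace for $S$, hence the $i^*(\lambda)$-eigenspace for $H$. Consequently, the $\gamma$-eigenspace of $\c[X]$ for the right $H$-action is exactly $\bigoplus_{\lambda} W_\lambda$ where $\lambda$ runs over $\bar\gamma \cap \Lambda_\sigma^+$ and $\bar\gamma \subset \Lambda_\sigma$ is the coset mapping to $\gamma$ under $i^*$.

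For part (1), I would use the isotypic decomposition of $L$ as a $G$-module. Since the $W_\lambda = V_\lambda^*$ are pairwise non-isomorphic irreducible $G$-modules, any finite-dimensional $G$-invariant subspace $L \subset \c[X]$ satisfies $L = \bigoplus_{\lambda \in \supp(L)} W_\lambda$ (each intersection $L \cap W_\lambda$ is $0$ or all of $W_\lambda$). If $L$ sits inside the $\gamma$-eigenspace of $H$, then each such $W_\lambda$ does too, forcing $i^*(\lambda) = \gamma$, i.e.\ $\supp(L) \subset \bar\gamma$. This gives the coset statement, and the claim about the affine span of $\supp(L)$ being parallel to $\Lambda_\r(H)$ is immediate. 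For part (2), conversely, if $A \subset \mu + \Lambda(H)$ for some $\mu$, then every $\lambda \in A$ has $i^*(\lambda) = i^*(\mu) =: \gamma$, so each $W_\lambda \subset L_A$ is a $\gamma$-eigenspace under $H$, and hence so is their direct sum $L_A$.

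There is no deep obstruction here; the proposition is essentially bookkeeping tying together the structure theorems already established. The only places requiring mild care are the verification that the right $H$-action really does factor through the torus $S$ (where one uses $P' \trianglelefteq P$ together with $P' \subset H$), and the application of Schur-type isotypic decomposition, which needs the $W_\lambda$ to be irreducible and pairwise non-isomorphic so that $L$ must be an honest direct sum of entire $W_\lambda$'s rather than a proper subspace of some isotypic component.
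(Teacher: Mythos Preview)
Your proposal is correct and follows essentially the same route as the paper: both arguments observe that each $W_\lambda$ is an $i^*(\lambda)$-eigenspace for the right $H$-action, so that membership of $L$ in the $\gamma$-eigenspace forces $i^*(\lambda)=\gamma$ for every $\lambda\in\supp(L)$, and conversely. You supply slightly more justification than the paper does for the decomposition $L=\bigoplus_{\lambda\in\supp(L)} W_\lambda$ (via isotypic components and the pairwise non-isomorphism of the $V_\lambda^*$), whereas the paper simply takes this as part of the definition of $\supp(L)$; otherwise the proofs are the same.
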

\begin{proof}
Let $\lambda, \mu \in \supp(L)$ with $W_\lambda, W_\mu$ the corresponding eigenspaces
in $\c[G/P']$ for the right action of $P$.
Then the functions in $W_\lambda$ and $W_\mu$ are automatically eigenfunctions for the action of
$H$ with weights $i^*(\lambda)$ and $i^*(\mu)$ respectively,
where $i^*: \mathfrak{X}(P) \to \mathfrak{X}(H)$ is the restriction of characters.
On the other hand, every function in $L$ is an eigenfunction for $H$ with weight $\gamma$. This shows that $i^*(\lambda) = i^*(\mu) = \gamma$ and thus $i^*(\lambda - \mu) = 0$ i.e. $\lambda - \mu \in \Lambda(H)$. This proves 1). Now let $A \subset \Lambda_\sigma^+$
lay in a coset of $\Lambda(H)$. Then $i^*(A)$ consists of a single point $\gamma \in \mathfrak{X}(H)$.
Then $L_A = \bigoplus_{\lambda \in A} W_\lambda \subset \c[G/P']$ consists of eigenfunctions of $H$ with
weight $\gamma$. This finishes the proof of 2).
\end{proof}

From the previous proposition it follows that the moment polytope of $E$ lies in an affine subspace
parallel to $\Lambda_\r(H)$.

Consider the collection $\mathcal{S}(\Lambda(H))$ consisting of finite subsets $A$ of
$\Lambda_\sigma$ such that for any $\lambda, \mu \in A$
we have $\lambda - \mu \in \Lambda(H)$ (i.e. $A$ lies in a coset of $\Lambda(H)$).
Clearly $\mathcal{S}(\Lambda(H))$ is a semigroup under the addition of subsets. Also consider the collection $\mathcal{P}(\Lambda(H))$ of all the convex lattice polytopes $\Delta$ in $\Lambda_\sigma$ such that for any
two vertices $\lambda, \mu$ of $\Delta$ we have $\lambda - \mu \in \Lambda(H)$.
It is also clear that $\mathcal{P}(\Lambda(H))$ is a semigroup with respect to the
Minkowski sum of convex bodies.

As in Theorem \ref{th-finite-sets-convex-polytope-semigps} we have:
\begin{Prop} \label{prop-Grothendieck-semigp-S(Lambda-H)}
The map $A \mapsto \Delta(A)$, the convex hull of $A$, gives an isomorphism between the
Grothendieck semigroup of $\mathcal{S}(\Lambda(H))$ and the semigroup
$\mathcal{P}(\Lambda(H))$.
\end{Prop}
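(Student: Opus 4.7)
The plan is to imitate the proof of Theorem \ref{th-finite-sets-convex-polytope-semigps} essentially verbatim, reducing the coset setting to the full-lattice case by translation. First, I verify that $A \mapsto \Delta(A)$ is a well-defined semigroup homomorphism $\mathcal{S}(\Lambda(H)) \to \mathcal{P}(\Lambda(H))$: since every vertex of a finite-set convex hull lies in that set, and $A$ lies in a single coset of $\Lambda(H)$, so do the vertices of $\Delta(A)$, giving $\Delta(A)\in\mathcal{P}(\Lambda(H))$. The identity $\Delta(A+B)=\Delta(A)+\Delta(B)$ is Proposition \ref{prop-8}, and $\mathcal{P}(\Lambda(H))$ inherits the cancellation property as a subsemigroup of the (cancellative) semigroup of all convex bodies (Proposition \ref{prop-7}). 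The universal property of the Grothendieck semigroup thus yields an induced homomorphism $\bar{\rho} : \Gr(\mathcal{S}(\Lambda(H))) \to \mathcal{P}(\Lambda(H))$, and it remains to show $\bar{\rho}$ is a bijection.

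Surjectivity of $\bar{\rho}$ is immediate: given $\Delta\in\mathcal{P}(\Lambda(H))$, take $A$ to be the vertex set of $\Delta$; by hypothesis $A\in\mathcal{S}(\Lambda(H))$ and $\Delta(A)=\Delta$. For injectivity, assume $\Delta(A)=\Delta(B)$ and produce $C\in\mathcal{S}(\Lambda(H))$ with $A+C=B+C$. Pick any vertex $c$ of the shared polytope; since the vertices belong to $A$ and to $B$, one has $c\in A\cap B$, so $A,B\subset c+\Lambda(H)$. Translating $A-c,B-c\subset\Lambda(H)$, choose an identification $\Lambda(H)\cong\z^k$ with $k=\rank\Lambda(H)$. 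Writing $\Delta':=\Delta(A)-c=\Delta(A-c)=\Delta(B-c)$, Proposition \ref{prop-9} applied inside $\Lambda(H)$ gives
\begin{equation*}
(A-c)+n(\Delta'\cap\Lambda(H))=(n+1)(\Delta'\cap\Lambda(H))=(B-c)+n(\Delta'\cap\Lambda(H))
\end{equation*}
for all sufficiently large $n$. Adding $c$ to the outer equation produces $A+C=B+C$ with $C:=n(\Delta'\cap\Lambda(H))\subset\Lambda(H)\subset\Lambda_\sigma$, which is a finite subset lying in the single coset $0+\Lambda(H)$, hence belongs to $\mathcal{S}(\Lambda(H))$. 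Thus $A\sim B$.

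The main (and minor) obstacle is bookkeeping: one must check that the witness set $C$ manufactured inside the sublattice $\Lambda(H)$ remains a legitimate element of $\mathcal{S}(\Lambda(H))$ when viewed inside $\Lambda_\sigma$. Because the entire argument takes place inside the single coset $c+\Lambda(H)$, then translated to $\Lambda(H)$, no saturation hypothesis on $\Lambda(H)\subset\Lambda_\sigma$ is needed and Proposition \ref{prop-9} applies directly to the translated finite sets.
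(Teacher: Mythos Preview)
Your proof is correct and follows exactly the route the paper indicates: the paper gives no separate argument for this proposition, merely prefacing it with ``As in Theorem \ref{th-finite-sets-convex-polytope-semigps}'', and your write-up is precisely the translation-to-a-coset version of that theorem's proof using Propositions \ref{prop-7}, \ref{prop-8}, and \ref{prop-9}. The only cosmetic discrepancy is that Proposition \ref{prop-9} in the paper fixes $n$ as the ambient rank rather than taking it ``sufficiently large'', but your version follows from it by iterating the identity $\Delta_\z + n\Delta_\z = (n+1)\Delta_\z$.
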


From Proposition \ref{prop-supp-parallel-Lambda-H} we get the following corollary:
\begin{Cor}
The map $L \mapsto \supp(L)$ gives an isomorphism between the semigroup
of finite dimensional subspaces of $\c[X]$ which are invariant under the left action of $G$ and
contained in some eigenspace for the right action of $H$, and the semigroup $\mathcal{S}(\Lambda(H))$ of finite subsets of $\Lambda_\sigma$ which lie in a coset of $\Lambda(H)$.
\end{Cor}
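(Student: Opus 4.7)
The plan is to deduce this corollary by restricting the isomorphism of Theorem \ref{th-semi-gp-inv-subspace-G/P'}(1) to appropriate sub-semigroups on both sides. That theorem already asserts that $L \mapsto \supp(L)$ is a semigroup isomorphism from $\K_G(X)$ onto the semigroup $\mathcal{S}(\Lambda_\sigma^+)$ of all finite subsets of $\Lambda_\sigma^+$. Since the support of any $G$-invariant subspace of $\c[X]$ automatically lies in $\Lambda_\sigma^+$, the content of the corollary is to identify the sub-semigroup of those subspaces that are also contained in a single right-$H$-eigenspace with the sub-semigroup of finite subsets lying in a coset of $\Lambda(H)$.

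First I would verify that each side is closed under its semigroup operation. On the geometric side, if $L_1$ and $L_2$ are contained respectively in the $\gamma_1$- and $\gamma_2$-eigenspaces for the right $H$-action, then for $f \in L_1$, $g \in L_2$ and $h \in H$, one has $(fg) \cdot h = \gamma_1(h)\gamma_2(h) fg$, so $L_1 L_2$ lies in the $(\gamma_1+\gamma_2)$-eigenspace. On the combinatorial side, if $A_1 \subset \lambda_1 + \Lambda(H)$ and $A_2 \subset \lambda_2 + \Lambda(H)$, then $A_1 + A_2 \subset (\lambda_1 + \lambda_2) + \Lambda(H)$. Hence both collections are sub-semigroups of $\K_G(X)$ and $\mathcal{S}(\Lambda_\sigma^+)$, respectively.

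Next, I would match the two sub-semigroups under $L \mapsto \supp(L)$ using Proposition \ref{prop-supp-parallel-Lambda-H}. Part (1) of that proposition shows that if $L$ is $G$-invariant and lies in some $H$-eigenspace, then $\supp(L)$ is contained in a coset of $\Lambda(H)$; conversely, part (2) shows that if $A \subset \Lambda_\sigma^+$ lies in a coset of $\Lambda(H)$, then $L_A = \bigoplus_{\lambda \in A} W_\lambda$ lies in a single $H$-eigenspace. Since the global map $L \mapsto \supp(L)$ is already a bijective semigroup homomorphism, its restriction to the two sub-semigroups is the required isomorphism.

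There is no real obstacle here: the corollary is a bookkeeping consequence of Theorem \ref{th-semi-gp-inv-subspace-G/P'}(1) together with Proposition \ref{prop-supp-parallel-Lambda-H}. The only minor subtlety worth stating cleanly is that the semigroup operations on the two sides genuinely correspond, i.e.\ that multiplying eigencharacters of $H$ corresponds under $i^*$ to adding cosets of $\Lambda(H)$, which follows immediately from the fact that $\Lambda(H) = \ker(i^*)$.
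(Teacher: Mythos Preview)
Your proposal is correct and follows exactly the route the paper intends: the paper states the corollary as an immediate consequence of Proposition \ref{prop-supp-parallel-Lambda-H} (with the ambient isomorphism of Theorem \ref{th-semi-gp-inv-subspace-G/P'}(1) implicitly in the background), and you have simply written out those details, including the routine check that both sides are sub-semigroups.
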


The next proposition follows immediately from Theorem \ref{th-G/P'} and Theorem \ref{th-E-L(E)}.
\begin{Prop} \label{prop-support-additive-G-lin-system}
\begin{enumerate}
\item Let $E_1, E_2 \in \tilde{\K}_G(Y)$ be two $G$-invariant linear systems. Then
$$\supp(E_1 E_2) = \supp(E_1) + \supp(E_2),$$ and hence
$$\Delta(E_1E_2) = \Delta(E_1) + \Delta(E_2).$$
\end{enumerate}
\end{Prop}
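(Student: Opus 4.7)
The plan is to reduce the statement to the analogous one for $G$-invariant subspaces of $\c[X]$ (where $X = G/P'$) already established in Proposition \ref{prop-support-additive-C[X]}, using the identification $E \mapsto L(E)$ provided by Theorem \ref{th-E-L(E)}. The first step is to observe that $\supp(E_i) = \supp(L(E_i))$ by the remark preceding Theorem \ref{th-E-L(E)}, so it suffices to prove $\supp(L(E_1 E_2)) = \supp(L(E_1)) + \supp(L(E_2))$ and then pass back to $E_1 E_2$.

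Next I would verify that the map $E \mapsto L(E)$ is multiplicative, i.e. that $L(E_1 E_2) = L(E_1) L(E_2)$ as subspaces of $\c[X]$. Concretely, if $\L_1 = \L_{\gamma_1}$ and $\L_2 = \L_{\gamma_2}$ are the corresponding $G$-linearized bundles on $Y$, then $\L_1 \otimes \L_2 = \L_{\gamma_1 + \gamma_2}$, and the pullbacks $\pi^*\L_i$ are trivial, with $\pi^*(\L_1 \otimes \L_2)$ trivialized as the product of the trivializations of the two factors. Under these trivializations, the natural tensor product map $H^0(Y, \L_1) \otimes H^0(Y, \L_2) \to H^0(Y, \L_1 \otimes \L_2)$ corresponds to pointwise multiplication of the associated regular functions on $X$, so $L(E_1 E_2)$ equals the linear span of the products $f_1 f_2$ for $f_i \in L(E_i)$, which is exactly $L(E_1) L(E_2)$.

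Having reduced to the ring $\c[X]$, I would then apply Proposition \ref{prop-support-additive-C[X]}, whose proof rests on part (3) of Theorem \ref{th-G/P'}: the decomposition $\c[X] = \bigoplus_{\lambda \in \Lambda_\sigma^+} W_\lambda$ is a $\Lambda_\sigma^+$-grading of the ring, with $W_\lambda W_\mu \subset W_{\lambda+\mu}$, and each $W_\lambda$ is an irreducible $G$-module. Consequently, if $L(E_1) = \bigoplus_{\lambda \in A_1} W_\lambda$ and $L(E_2) = \bigoplus_{\mu \in A_2} W_\mu$ with $A_i = \supp(E_i)$, then $L(E_1) L(E_2)$ is contained in $\bigoplus_{\nu \in A_1 + A_2} W_\nu$, and the Cartan component argument (the product of highest weight vectors in $V_\lambda^* \subset W_\lambda$ and $V_\mu^* \subset W_\mu$ is a nonzero highest weight vector of weight $\lambda + \mu$, spanning the $V_{\lambda+\mu}^*$-isotypic component by $G$-invariance) shows every $W_\nu$ for $\nu \in A_1 + A_2$ is actually hit. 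Thus $\supp(L(E_1) L(E_2)) = A_1 + A_2$.

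Finally, $\Delta(E_1 E_2) = \Delta(E_1) + \Delta(E_2)$ follows by taking convex hulls of both sides of $\supp(E_1 E_2) = \supp(E_1) + \supp(E_2)$, since $\conv(A + B) = \conv(A) + \conv(B)$ for finite sets. The only nontrivial ingredient is the multiplicativity $L(E_1 E_2) = L(E_1) L(E_2)$; all the rest is a direct translation of Proposition \ref{prop-support-additive-C[X]}, which is why the authors announce the present proposition as an immediate corollary.
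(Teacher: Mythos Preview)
Your proposal is correct and follows exactly the route the paper indicates: the paper states only that the proposition ``follows immediately from Theorem \ref{th-G/P'} and Theorem \ref{th-E-L(E)},'' and you have spelled out precisely how --- namely by checking that $E \mapsto L(E)$ is multiplicative and then invoking Proposition \ref{prop-support-additive-C[X]} (which is itself a consequence of Theorem \ref{th-G/P'}). Your added detail on the Cartan component is more than the paper supplies, but it is in the same spirit and correct.
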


Now, as in Theorem \ref{th-semi-gp-inv-subspace-G/P'}
we obtain the following description for the semigroup of invariant linear
systems on $Y$ as well as a description of the completion of a linear system.
\begin{Cor}[Semigroup of $G$-invariant linear systems] \label{cor-semi-gp-inv-lin-system-G/H}
\begin{enumerate}
\item The map $E \mapsto \supp(E)$ gives an isomorphism between the semigroup $\tilde{\K}_G(Y)$ of $G$-invariant linear systems and the semigroup $\mathcal{S}(\Lambda(H))$ of finite subsets of $\Lambda_\sigma$ which lie in a coset of $\Lambda(H)$.

\item The map $E \mapsto \Delta(E)$ gives an isomorphism of the Grothendieck semigroup of $\tilde{\K}_G(Y)$ and the
semigroup $\mathcal{P}(\Lambda(H))$ of convex lattice polytopes in $\sigma_\r$ whose set of vertices lie in a coset of $\Lambda(H)$.

\item Let $\ol{E}$ be the completion of $E$. Then $\supp(\ol{E})$ is the intersection of
the coset of $\Lambda(H)$ containing $\supp(E)$ and the moment polytope $\Delta(E)$.
\end{enumerate}
\end{Cor}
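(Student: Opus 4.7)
The plan is to transport the description of the semigroup $\K_G(X)$ on $X = G/P'$ given by Theorem \ref{th-semi-gp-inv-subspace-G/P'} across the bijection $E \mapsto L(E)$ of Theorem \ref{th-E-L(E)}. For part (1), I will first verify that $E \mapsto L(E)$ is a semigroup homomorphism: the pullback $\pi^*$ is multiplicative on sections, and under the trivialization of the pullback line bundles it sends the linear system product $E_1 E_2$ to the subspace product $L(E_1) L(E_2)$ in $\c[X]$. Combining Theorem \ref{th-E-L(E)} with Proposition \ref{prop-supp-parallel-Lambda-H}, the image of $E \mapsto L(E)$ is precisely the sub-semigroup of $\K_G(X)$ whose elements have support contained in some coset of $\Lambda(H)$; under Theorem \ref{th-semi-gp-inv-subspace-G/P'}(1) this maps isomorphically onto $\mathcal{S}(\Lambda(H))$. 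Since $\supp(E) = \supp(L(E))$, composition yields the claimed isomorphism.

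Part (2) then follows by passing to Grothendieck semigroups on both sides and applying Proposition \ref{prop-Grothendieck-semigp-S(Lambda-H)}, which identifies the Grothendieck semigroup of $\mathcal{S}(\Lambda(H))$ with $\mathcal{P}(\Lambda(H))$ via $A \mapsto \Delta(A)$; under the composition, the class of $E$ corresponds to $\Delta(\supp(E)) = \Delta(E)$.

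For part (3), the key first step is to observe that any two analogous linear systems on $Y$ must live on the same line bundle: if $E_1 E_3 = E_2 E_3$ as elements of $\tilde{\K}_G(Y)$, then $\L_1 \otimes \L_3 \cong \L_2 \otimes \L_3$, forcing $\L_1 \cong \L_2$. Hence $\supp(\ol{E})$ lies in the same coset $\gamma + \Lambda(H)$ as $\supp(E)$, where $\gamma \in \mathfrak{X}(H)$ is the character associated to the line bundle of $E$. Setting $A = \Delta(E) \cap (\gamma + \Lambda(H))$, Proposition \ref{prop-supp-parallel-Lambda-H}(2) shows that $L_A = \bigoplus_{\lambda \in A} W_\lambda \subset \c[X]$ lies in an $H$-eigenspace, so by Theorem \ref{th-E-L(E)} it equals $L(E^*)$ for some $E^* \in \tilde{\K}_G(Y)$. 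Since $\supp(E) \subseteq A \subseteq \Delta(E)$, we have $\Delta(E^*) = \Delta(A) = \Delta(E)$, and part (2) then gives $E^* \sim E$; by the maximality of the completion, $E^* \subseteq \ol{E}$, whence $A \subseteq \supp(\ol{E})$. Conversely, $\ol{E} \sim E$ forces $\Delta(\ol{E}) = \Delta(E)$, and the coset argument forces $\supp(\ol{E}) \subseteq \gamma + \Lambda(H)$, so $\supp(\ol{E}) \subseteq A$.

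The main obstacle I anticipate is the bookkeeping in part (3): one must carefully distinguish the completion of $E$ intrinsic to $\tilde{\K}_G(Y)$ from the potentially larger completion of $L(E)$ inside $\K_G(X)$, whose support is all of $\Delta(E) \cap \Lambda_\sigma^+$ and need not lie in a single coset of $\Lambda(H)$.
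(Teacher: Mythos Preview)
Your proposal is correct and follows essentially the same route as the paper, which presents this corollary without a separate proof, simply writing ``as in Theorem~\ref{th-semi-gp-inv-subspace-G/P'}'' after assembling Theorem~\ref{th-E-L(E)}, Proposition~\ref{prop-supp-parallel-Lambda-H}, Proposition~\ref{prop-support-additive-G-lin-system}, and Proposition~\ref{prop-Grothendieck-semigp-S(Lambda-H)}. Your detailed treatment of part~(3)---first pinning the completion to the same line bundle (hence the same coset of $\Lambda(H)$), then using part~(2) to see that $E^*$ with support $\Delta(E)\cap(\text{coset})$ is analogous to $E$ and invoking maximality of the completion---is exactly the argument the paper leaves implicit; the only cosmetic point is that ``$\gamma+\Lambda(H)$'' with $\gamma\in\mathfrak{X}(H)$ should strictly be written as the coset $(i^*)^{-1}(\gamma)\subset\Lambda_\sigma$.
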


As before let $\phi_\sigma$ be the homogeneous component of highest degree of the Weyl polynomial $F$
restricted to the linear span $\sigma_\r$ of the cone $\sigma$. Fix a Lebesgue measure on $\Lambda_\r(H)$ normalized with respect to $\Lambda(H)$, i.e.the smallest nonzero measure of a parallelepiped with vertices in
$\Lambda(H)$ is equal to $1$. We equip all the affine subspaces $a + \Lambda_\r(H)$, $a \in \sigma_\r$
with shifts of this Lebesgue measure.

\begin{Cor}[Intersection index of invariant linear systems] \label{cor-int-index-lin-system}
Let $E_1, \ldots, E_m \in \K_G(Y)$ be $G$-invariant linear systems where $m = \dim(Y)$. For
each $i$, let $\Delta_i = \Delta(E_i)$ be a moment polytope of $E_i$. We have
$$[E_1, \ldots, E_m] = m! I\phi_\sigma(\Delta_1, \ldots, \Delta_m),$$
where $I\phi_\sigma$ is the mixed integral of $\phi_\sigma$ for the polytopes
which are parallel to the linear space $\Lambda_\r(H)$.
\end{Cor}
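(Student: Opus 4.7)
The strategy is to compute the self-intersection index $[E,\ldots,E]$ via the Hilbert-function formula listed as property (4) after Definition \ref{def-int-index-lin-system}, and then deduce the mixed formula by polarization, mirroring the scheme used for Corollary \ref{cor-int-index-subspace}. Equivalently, one could try to reduce everything to $X=G/P'$ via the pullback correspondence $E \leftrightarrow L(E)$ of Theorem \ref{th-E-L(E)}, but the direct approach via Hilbert functions is cleaner because $\dim X \neq \dim Y$ in general.

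First, I would identify the completion of the $k$-th power. By Proposition \ref{prop-support-additive-G-lin-system} the support of $E^k$ is $k\cdot\supp(E)$, and by Corollary \ref{cor-semi-gp-inv-lin-system-G/H}(3) this completes to all lattice points in the moment polytope lying on the appropriate coset, so
$$\ol{E^k}\;\cong\;\bigoplus_{\lambda\in k\Delta(E)\cap(k\gamma_E+\Lambda(H))} W_\lambda,$$
where $\gamma_E\in\supp(E)$ fixes the coset. Since $W_\lambda\cong V_\lambda^*$ by Theorem \ref{th-G/P'}, the Weyl dimension formula gives $\dim W_\lambda = F(\lambda)$ and therefore
$$H_E(k)\;=\;\sum_{\lambda\in k\Delta(E)\cap(k\gamma_E+\Lambda(H))} F(\lambda).$$

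Next I would carry out the asymptotic analysis. The summation takes place on an affine subspace parallel to $\Lambda_\r(H)$ of dimension $r_H:=\dim\Lambda_\r(H)$, and there $F$ agrees with $F_\sigma$, whose top-degree homogeneous piece is $\phi_\sigma$ of degree $\dim(G/P)$. Note the dimension count $r_H+\deg\phi_\sigma = \dim(P/H)+\dim(G/P) = \dim(Y)=m$. The involved lattice points form an affine mesh of spacing $1/k$ whose density is exactly $k^{r_H}$ per unit Lebesgue volume, provided $d\gamma$ is normalized so that $\Lambda(H)$ has covolume one—which is precisely the normalization fixed before the corollary. A standard Riemann-sum argument then gives
$$\lim_{k\to\infty}\frac{H_E(k)}{k^m}\;=\;\int_{\Delta(E)}\phi_\sigma\,d\gamma,$$
and property (4) after Definition \ref{def-int-index-lin-system} yields $[E,\ldots,E]=m!\int_{\Delta(E)}\phi_\sigma\,d\gamma$.

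To pass from the diagonal to the mixed formula I would invoke polarization. The intersection index $[E_1,\ldots,E_m]$ is symmetric and multi-linear in the product of linear systems, which by Proposition \ref{prop-support-additive-G-lin-system} corresponds to Minkowski addition of moment polytopes; meanwhile $\Delta\mapsto\int_\Delta\phi_\sigma\,d\gamma$ extends to a homogeneous polynomial of degree $m$ on the space $\V(\Lambda_\r(H))$ of virtual convex bodies parallel to $\Lambda_\r(H)$, whose polarization is by definition $I\phi_\sigma$. Since both functions agree on the diagonal up to the factor $m!$, uniqueness of polarization forces $[E_1,\ldots,E_m]=m!\,I\phi_\sigma(\Delta_1,\ldots,\Delta_m)$.

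I expect the main obstacle to be the Riemann-sum step: one must check that the lower-degree terms of $F_\sigma$ and the boundary contributions of $k\Delta(E)$ are truly $o(k^m)$, so that only the top-degree component $\phi_\sigma$ integrated against the correctly normalized Lebesgue measure survives in the limit. This is routine for a convex polytope $\Delta(E)$ and a polynomial weight, but it is the one place where the normalization of $d\gamma$ with respect to the lattice $\Lambda(H)$ enters in an essential way, and getting the constants right is the delicate point.
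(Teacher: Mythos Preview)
Your proposal is correct and follows exactly the route the paper intends: the corollary is stated without proof, but the ingredients laid out immediately before it (the description of $\supp(\ol{E^k})$ in Corollary \ref{cor-semi-gp-inv-lin-system-G/H}(3), the Weyl dimension formula giving $\dim W_\lambda = F(\lambda)$, the Hilbert-function formula for the self-intersection index, and polarization) are precisely those you invoke, and your dimension count $r_H + \deg\phi_\sigma = \dim(P/H)+\dim(G/P)=m$ is the right bookkeeping. The Riemann-sum step is indeed routine once one observes that $\Delta(E)-\gamma_E$ has vertices in $\Lambda(H)$, so Ehrhart-type asymptotics apply with the stated normalization of $d\gamma$.
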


\subsection{Intersection index as mixed volume} \label{sec-G-C}
In this section we rewrite the formula for the intersection index as
a mixed volume of certain polytopes (instead of mixed integral). To this end, we
use the so-called {\it Gelfand-Cetlin polytopes}.

In their classical paper \cite{G-C}, Gelfand and Cetlin constructed
a natural basis for any irreducible representation of $\GL(n, \c)$
and showed how to parameterize the elements of this basis with
integral points in a certain convex polytope. These polytopes are
called the {\it Gelfand-Cetlin polytopes}. Since then similar
constructions have been done for other classical groups and
analogous polytopes were defined (see \cite{B-Z1}). We will also
call them  {\it Gelfand-Cetlin polytopes} or for short {\it G-C
polytopes}. Consider the list of groups $\c^*$, $\SL(n_1, \c)$, $\SO(n_2,
\c)$ and $\SP(2n_3, \c)$, for any $n_1, n_2, n_3 \in \n$.
We say that $G$ is a {\it classical group}
if $G$ is in this list, or if $G$ can be constructed from the
groups in the list using the operations of taking direct product
and/or taking quotient by a finite central subgroup.
In this sense, the general linear group and the
orthogonal group are classical groups.

Let $G$ be a classical group. As usual let $d = \dim(G)$.
We have:
\begin{Th}[G-C polytopes] \label{th-G-C-additive}
For any classical group $G$ and for any $\lambda \in \Lambda^+$ one
can explicitly construct a polytope $\Delta_{GC}(\lambda) \subset
\r^{(d-r)/2}$, called the Gelfand-Cetlin polytope of $\lambda$, with
the following properties:
\begin{enumerate}
\item If $\lambda$ is integral then the dimension of $V_\lambda$ is equal to the number of
integral points in $\Delta_{GC}(\lambda)$.
\item The map $\lambda \mapsto \Delta_{GC}(\lambda)$ is linear, i.e.
for any two $\lambda, \gamma \in \Lambda^+_\r$ and $c_1, c_2 \geq 0$ we
have $= \Delta_{GC}(c_1\lambda + c_2\gamma) =
c_1\Delta_{GC}(\lambda) + c_2\Delta_{GC}(\gamma).$
\end{enumerate}
\end{Th}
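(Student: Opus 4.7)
The plan is to construct $\Delta_{GC}(\lambda)$ case by case, according to the type of classical group, and then verify that the resulting polytopes satisfy the two stated properties. Since by definition a classical group is assembled from the basic groups $\c^*$, $\SL(n,\c)$, $\SO(n,\c)$ and $\SP(2n,\c)$ by direct products and quotients by finite central subgroups, it suffices to construct $\Delta_{GC}$ for each basic building block and then declare the polytope of a direct product to be the Cartesian product of the factor polytopes. Quotients by finite central subgroups do not affect $d-r$, $\Lambda^+$, or the representation theory on the level needed here, so they require no change in the construction.

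For $G=\SL(n,\c)$ I would follow the original construction of \cite{G-C}: writing $\lambda=(\lambda_1\geq\cdots\geq\lambda_n)$, the polytope $\Delta_{GC}(\lambda)\subset\r^{n(n-1)/2}$ is cut out by the interlacing inequalities on a triangular Gelfand--Cetlin array, and the ambient dimension $n(n-1)/2$ matches $(d-r)/2$ since $d=n^2-1$ and $r=n-1$. For $\SO(n,\c)$ and $\SP(2n,\c)$ I would invoke the Berenstein--Zelevinsky polytopes from \cite{B-Z1}, whose defining inequalities again involve the coordinates of $\lambda$ only through a system of linear constraints, and whose ambient dimension matches $(d-r)/2$ for the relevant root system. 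The case $G=\c^*$ is trivial: assign to each character $\lambda \in \z$ a single point in $\r^0$.

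Property (1) then follows from the classical statements in \cite{G-C} and \cite{B-Z1}: the integer points of the polytope are in bijection with a Gelfand--Cetlin (or GT-type) basis of $V_\lambda$, so their number equals $\dim V_\lambda$. Property (2), the linearity of $\lambda \mapsto \Delta_{GC}(\lambda)$, is read off directly from the defining inequalities: each facet of $\Delta_{GC}(\lambda)$ has the form $\sum a_{ij}x_{ij}\leq \ell(\lambda)$ with $\ell$ linear in $\lambda$ and coefficients $a_{ij}$ independent of $\lambda$. Because all polytopes $\Delta_{GC}(\lambda)$ for $\lambda \in \Lambda^+_\r$ are cut out by the same family of linear forms $\sum a_{ij} x_{ij}$, scaling $\lambda$ by $c\geq 0$ scales the right-hand sides by $c$ and hence scales the polytope by $c$; and the Minkowski sum of two polytopes given by the same system of inequalities is the polytope whose right-hand sides are the sums of the original right-hand sides, which is exactly $\Delta_{GC}(\lambda+\gamma)$.

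The main obstacle is that no single combinatorial formula covers all classical types uniformly, so one must carry out the bookkeeping separately in each case, checking that the combinatorial polytope really lives in $\r^{(d-r)/2}$ and that the inequalities are genuinely linear in $\lambda \in \Lambda^+_\r$ (not merely in the integral weights). A secondary subtlety is matching conventions: one has to reconcile the $\GL(n,\c)$ parameterization used in the original Gelfand--Cetlin paper with the $\SL(n,\c)$ weight lattice, and in type $D_n$ one must take care with the pair of spinor directions. Since each of these adjustments is routine and the constructions are available in the literature, the theorem reduces to an assembly of known results.
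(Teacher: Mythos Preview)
Your proposal is correct and matches the paper's own treatment: the paper does not give a formal proof but simply cites \cite{G-C} and \cite{B-Z1} for the construction and part (1), and remarks in one sentence that part (2) ``is an immediate corollary of the defining inequalities of the G-C polytopes for the classical groups'' --- exactly the Minkowski-sum-via-common-facet-normals argument you spell out. Your case-by-case assembly (including the handling of direct products and central quotients) is a faithful, more detailed version of what the paper leaves implicit.
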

The part (2) in the above theorem is an immediate corollary of the defining inequalities of
the G-C polytopes for the classical groups.

\begin{Def}[Newton polytope] \label{def-Newton-polytope}
Let $A$ be a finite nonempty set of dominant weights in $\Lambda_\sigma^+$.
Define the polytope $\tilde{\Delta}(A) \subset \sigma \times \r^{(d-r)/2}$
by:
$$\tilde{\Delta}(A) = \bigcup_{\lambda \in \Delta(A)} \{(\lambda, x) \mid x \in \Delta_{GC}(\lambda) \}.$$
In other words, the projection on the first factor maps
$\tilde{\Delta}(A)$ to the moment polytope $\Delta(A)$ and the
fibre over each $\lambda$ is the G-C polytope
$\Delta_{GC}(\lambda)$.
For a $G$-invariant subspace $L \subset \c[X]$ we define its {\it Newton polytope} $\tilde{\Delta}(L)$ to be
the polytope $\tilde{\Delta}(\supp(L))$. Similarly we define a {\it Newton polytope} $\tilde{\Delta}(E)$ of a $G$-invariant linear system $E$ to be the Newton polytope of $\supp(E)$.
\end{Def}

\begin{Cor}[Intersection index of subspaces as mixed volume] \label{cor-int-index-subspace-mixed-vol}
Let $L_1, \ldots, L_p \in \K_G(X)$ be $G$-invariant subspaces where $p = \dim(X)$. For
each $i$, let $\tilde{\Delta}_i = \tilde{\Delta}(L_i)$ be the Newton polytope of the subspace $L_i$. We have
$$[L_1, \ldots, L_p] = p! V(\tilde{\Delta}_1, \ldots, \tilde{\Delta}_p),$$
where $V$ denotes the mixed volume of convex bodies in the cone $\sigma$.
\end{Cor}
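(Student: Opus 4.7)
The plan is to reduce this to Corollary \ref{cor-int-index-subspace} by establishing the identity $V(\tilde{\Delta}_1,\ldots,\tilde{\Delta}_p) = I\phi_\sigma(\Delta_1,\ldots,\Delta_p)$ via polarization. The strategy has two ingredients: (i) a Fubini-type identity comparing the volume of the fibered polytope $\tilde{\Delta}(L)$ to the integral of $\phi_\sigma$ over the moment polytope $\Delta(L)$; and (ii) additivity of the Newton-polytope construction with respect to Minkowski sum. Once both are in place, polarizing a polynomial identity in one variable upgrades it to the desired mixed identity in $p$ variables.

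For the single-polytope identity, I would argue as follows. By the linearity property of G-C polytopes (Theorem \ref{th-G-C-additive}(2)), the set $\tilde{\Delta}(L) = \bigcup_{\lambda\in\Delta(L)}\{\lambda\}\times\Delta_{GC}(\lambda)$ is genuinely a convex polytope in $\sigma_\r\times\r^{(d-r)/2}$, of dimension $\dim\sigma+(d-r)/2 = \dim X = p$. Since its projection to the first factor is $\Delta(L)$ with fibre $\Delta_{GC}(\lambda)$, Fubini gives
\begin{equation*}
\Vol(\tilde{\Delta}(L)) \;=\; \int_{\Delta(L)} \Vol(\Delta_{GC}(\lambda))\, d\lambda.
\end{equation*}
Now $\Vol(\Delta_{GC}(\lambda))$, as a function of $\lambda\in\sigma_\r$, is a homogeneous polynomial of degree $(d-r)/2$ by the linearity property. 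On the other hand, by the Weyl dimension formula and Theorem \ref{th-G-C-additive}(1), for integral $\lambda$ the number of integral points in $k\Delta_{GC}(\lambda)=\Delta_{GC}(k\lambda)$ equals $F(k\lambda)$, whose leading term in $k$ is $k^{(d-r)/2}\phi_\sigma(\lambda)$. Matching this with the Ehrhart leading term $k^{(d-r)/2}\Vol(\Delta_{GC}(\lambda))$ forces $\Vol(\Delta_{GC}(\lambda)) = \phi_\sigma(\lambda)$. Substituting back, $\Vol(\tilde{\Delta}(L)) = I\phi_\sigma(\Delta(L))$.

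Next I establish additivity $\tilde{\Delta}(L_1L_2) = \tilde{\Delta}(L_1)+\tilde{\Delta}(L_2)$ in the sense of Minkowski sum. By Proposition \ref{prop-support-additive-C[X]}, $\Delta(L_1L_2) = \Delta(L_1)+\Delta(L_2)$; combined with $\Delta_{GC}(\lambda_1+\lambda_2) = \Delta_{GC}(\lambda_1)+\Delta_{GC}(\lambda_2)$ from Theorem \ref{th-G-C-additive}(2), a direct check shows that a point $(\mu,y)$ lies in $\tilde{\Delta}(L_1L_2)$ iff it decomposes as $(\lambda_1,x_1)+(\lambda_2,x_2)$ with each $(\lambda_i,x_i)\in\tilde{\Delta}(L_i)$. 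Consequently, for nonnegative reals $c_1,\ldots,c_p$ we obtain the polynomial identity
\begin{equation*}
\Vol\!\left(\sum_i c_i\tilde{\Delta}_i\right) \;=\; I\phi_\sigma\!\left(\sum_i c_i\Delta_i\right),
\end{equation*}
both sides being homogeneous of degree $p$ in the $c_i$ (thanks to Section \ref{sec-mixed-vol}). Comparing the coefficients of $c_1\cdots c_p$ on both sides — that is, polarizing — yields $V(\tilde{\Delta}_1,\ldots,\tilde{\Delta}_p) = I\phi_\sigma(\Delta_1,\ldots,\Delta_p)$. Multiplying by $p!$ and invoking Corollary \ref{cor-int-index-subspace} completes the proof.

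The step I expect to be most delicate is the identification $\Vol(\Delta_{GC}(\lambda)) = \phi_\sigma(\lambda)$: it requires simultaneously the precise count of integer points from the Weyl dimension formula and the Ehrhart asymptotic, and rests on the (non-trivial) input that $\Delta_{GC}$ depends linearly on $\lambda$ in the sense of Theorem \ref{th-G-C-additive}(2), which is case-by-case for the classical groups. The rest of the argument — Fubini, Minkowski additivity via the linear-fibre structure, and polarization of a polynomial identity — is formal once this computation is in hand.
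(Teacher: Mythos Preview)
The paper states this as a corollary without supplying an explicit proof; your argument is precisely the natural derivation the paper's structure implies---reduce to Corollary~\ref{cor-int-index-subspace} by establishing $\Vol(\tilde\Delta)=I\phi_\sigma(\Delta)$ via Fubini and Ehrhart, then polarize using the Minkowski additivity $\tilde\Delta(L_1L_2)=\tilde\Delta(L_1)+\tilde\Delta(L_2)$ inherited from Theorem~\ref{th-G-C-additive}(2) and Proposition~\ref{prop-support-additive-C[X]}. The strategy and execution are correct.

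One imprecision worth flagging: when $\sigma$ is a \emph{proper} face of the Weyl chamber (i.e.\ $P\neq B$), the G-C polytope $\Delta_{GC}(\lambda)$ for $\lambda\in\sigma$ has dimension $\deg\phi_\sigma=p-\dim\sigma$, which is strictly less than $(d-r)/2$; likewise the leading term of $F(k\lambda)$ in $k$ has degree $\deg\phi_\sigma$, not $(d-r)/2$. Your Ehrhart matching still goes through verbatim once both exponents are corrected to $\deg\phi_\sigma$ (they shift together), provided ``volume'' is read as the lattice-normalized volume in the affine span---which is also how the paper's phrase ``mixed volume of convex bodies in the cone $\sigma$'' must be interpreted. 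So the slip is cosmetic, not structural.
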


\begin{Cor}[Intersection index of linear systems as mixed volume] \label{cor-int-index-lin-system-mixed-vol}
Let $E_1, \ldots, E_m \in \tilde{\K}_G(Y)$ be $G$-invariant linear systems where $m = \dim(Y)$. For
each $i$, let $\tilde{\Delta}_i = \tilde{\Delta}(E_i)$ be a Newton polytope of $E_i$. We have
$$[E_1, \ldots, E_m] = m! V_H(\tilde{\Delta}_1, \ldots, \tilde{\Delta}_m),$$
where $V_H$ denotes the mixed volume of convex polytopes in $\sigma$ parallel to $\Lambda_\r(H)$.
\end{Cor}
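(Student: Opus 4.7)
My plan is to derive the corollary by combining the mixed integral formula of Corollary \ref{cor-int-index-lin-system} with a fiberwise interpretation of the Newton polytope $\tilde{\Delta}(E)$ as an iterated integral over the moment polytope. The key geometric input will be the linearity of the Gelfand--Cetlin construction (Theorem \ref{th-G-C-additive}(2)), which makes the map $E \mapsto \tilde{\Delta}(E)$ behave well with respect to Minkowski sum.

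First I would verify that $\tilde{\Delta}$ is additive: given two $G$-invariant linear systems $E_1, E_2$ with moment polytopes $\Delta_1, \Delta_2$ parallel to $\Lambda_\r(H)$, Proposition \ref{prop-support-additive-G-lin-system} gives $\Delta(E_1 E_2) = \Delta_1 + \Delta_2$, and for any $\lambda = \lambda_1 + \lambda_2$ with $\lambda_i \in \Delta_i$ the linearity of G-C polytopes yields $\Delta_{GC}(\lambda) = \Delta_{GC}(\lambda_1) + \Delta_{GC}(\lambda_2)$. Hence $\tilde{\Delta}(E_1 E_2) = \tilde{\Delta}_1 + \tilde{\Delta}_2$ as subsets of $\sigma_\r \times \r^{(d-r)/2}$, and each $\tilde{\Delta}_i$ is parallel to the fixed linear subspace $\Lambda_\r(H) \times \r^{\dim(G/P)}$, which has dimension $\dim \Lambda_\r(H) + \dim(G/P) = \dim(Y) = m$.

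Next I would identify the volume of a single Newton polytope with the mixed integral of $\phi_\sigma$. By Fubini applied to the projection $\tilde{\Delta}(E) \to \Delta(E)$,
\begin{equation*}
\Vol(\tilde{\Delta}(E)) = \int_{\Delta(E)} \Vol\bigl(\Delta_{GC}(\lambda)\bigr)\, d\gamma.
\end{equation*}
To evaluate $\Vol(\Delta_{GC}(\lambda))$ I would use Theorem \ref{th-G-C-additive}(1) and (2): for integral $\lambda \in \Lambda_\sigma^+$, the number of integral points in $c\Delta_{GC}(\lambda) = \Delta_{GC}(c\lambda)$ equals $\dim V_{c\lambda} = F(c\lambda)$; letting $c \to \infty$ and extracting leading-order growth, $\Vol(\Delta_{GC}(\lambda))$ equals the top-degree homogeneous component of $F$ restricted to $\sigma_\r$, i.e.\ $\phi_\sigma(\lambda)$. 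Therefore $\Vol(\tilde{\Delta}(E)) = I\phi_\sigma(\Delta(E))$, and polarizing this identity (which is legitimate because $\Vol$ and $I\phi_\sigma$ are both homogeneous polynomials of degree $m$ on the subspace of virtual convex bodies parallel to $\Lambda_\r(H)$ together with the G-C direction) yields
\begin{equation*}
V_H(\tilde{\Delta}_1, \ldots, \tilde{\Delta}_m) = I\phi_\sigma(\Delta_1, \ldots, \Delta_m).
\end{equation*}
Combining this with Corollary \ref{cor-int-index-lin-system} proves the claim.

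The main obstacle I anticipate is the identification $\Vol(\Delta_{GC}(\lambda)) = \phi_\sigma(\lambda)$ with the correct normalization, and more subtly the matching of Lebesgue measures: the mixed integral uses the measure on $\sigma_\r$ shifted from $\Lambda_\r(H)$, while the mixed volume $V_H$ uses the product of that measure with the Lebesgue measure on $\r^{(d-r)/2}$ (or rather, on the relevant $\dim(G/P)$-dimensional subspace containing all the fibers $\Delta_{GC}(\lambda)$ for $\lambda \in \sigma_\r$). Verifying that the fibers $\Delta_{GC}(\lambda)$ for $\lambda$ on the face $\sigma$ actually lie in a common $\dim(G/P)$-dimensional affine subspace of $\r^{(d-r)/2}$, so that $\phi_\sigma$ (of degree $\dim(G/P)$) is the appropriate measure of fiber volume, requires going back to the explicit description of the G-C inequalities, but once this matching of dimensions is in place the rest of the argument is a formal consequence of Fubini and polarization.
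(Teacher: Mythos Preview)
Your approach is correct and is essentially what the paper intends: the corollary is stated there without proof, as an immediate consequence of Corollary~\ref{cor-int-index-lin-system} and Theorem~\ref{th-G-C-additive} via exactly the additivity-of-$\tilde{\Delta}$, Fubini, and polarization steps you outline. The technical points you flag (measure normalization, and that for $\lambda\in\sigma$ the polytope $\Delta_{GC}(\lambda)$ lies in a fixed $\dim(G/P)$-dimensional coordinate subspace of $\r^{(d-r)/2}$) are genuine but resolvable from the explicit G-C inequalities, and the paper likewise leaves them to the reader.
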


\subsection{Case of $\GL(n, \c)$} \label{sec-GL(n)}
Let $G = \GL(n, \c)$ and let $B$ be the Borel subgroup of upper triangular matrices. Then the subgroup $T$
of diagonal matrices is a maximal torus, and the subgroup of upper-triangular matrices with $1$'s on the diagonal
is the maximal unipotent subgroup contained in $B$.
We identify the weight lattice $\Lambda$ with $\z^n$ and its linear span $\Lambda_\r$ with $\r^n$.
The Weyl group of $G$ is identified with the symmetric group $S_n$ which acts on $\r^n$ by permuting the coordinates.
The positive Weyl chamber for the choice of $B$ is
$\Lambda_\r^+ = \{ \lambda = (\lambda_1, \ldots, \lambda_n) \mid \lambda_n \geq \cdots \geq \lambda_1 \}$.

There is a one-to-one correspondence between the subsets $I = \{k_1 < \cdots < k_s\}$
of $\{1, \ldots, n-1\}$ and the
faces $$\sigma_I = \{ \lambda = (\lambda_1, \ldots, \lambda_n) \in \Lambda_\r^+ \mid
\lambda_{k_i+1} = \lambda_{k_i},~ i=1, \ldots, s\}$$
of the positive Weyl chamber. Also each subset $I$ then
corresponds to a parabolic subgroup $P_I$ consisting of the block upper-triangular matrices with
blocks of fixed sizes $k_1, k_2-k_1, \ldots, k_s-k_{s-1}, n-k_s$. One verifies that the commutator
subgroup $P_I'$ consists of block upper-triangular matrices where determinant of each block is equal to $1$.

Moreover, the torus $S = P_I/P_I'$ can be identified with $(\c^*)^s$ and
the natural map $P_I \to P_I/P_I'$ is given by $x \mapsto (\det(B_1), \ldots, \det(B_s))$,
where $x \in P$ and $B_1, \ldots, B_s$, are the blocks of sizes $k_1, k_2-k_1, \ldots, k_s-k_{s-1}, n-k_s$
respectively sitting on the diagonal of $x$.

Let us see that $G/P_I'$ is quasi-affine by giving an embedding of this variety in
some affine space. Let $g \in G$ be an invertible matrix with columns $C_1, \ldots, C_n$.
Consider the map $\Psi: G \to \bigoplus_{i=1}^s \bigwedge^{k_i} \c^n$ given by
$$g \mapsto \sum_{i=1}^s(C_1 \wedge \cdots \wedge C_{k_i}).$$
One verifies that $\Psi$ induces an embedding of $G/P_I'$ into the affine space $\bigoplus_{i=1}^s \bigwedge^{k_i} \c^n$.
This map is closely related to the generalized Pl\"{u}cker embedding.

Since $G$ is a Zariski open subset of $\c^{(n^2)}$, it is clear that $\Pic(G) = \{0\}$.

In the next example we consider a special case of a horospherical homogeneous space
which is related to the classical Bezout theorem.
\begin{Ex}
Let $G = \GL(n, \c)$ act on $\c^n$ in the natural way. Then $\c^n \setminus\{0\}$ is an orbit $O$.
Let $H$ be the $G$-stabilizer of $e_1$, the first standard basis vector. It contains the
subgroup of upper triangular matrices with $1$'s on the diagonal, i.e. $H$ is a horospherical subgroup and $O$ is a
horospherical homogeneous space $G/H$. If $n>1$, the space of regular functions $\c[O]$ is isomorphic to
the polynomial algebra on $\c^n$ and as a
$G$-module it decomposes into $$\c[O] = \bigoplus_{k=1}^\infty V_k,$$
where $V_k$ is the space of homogeneous polynomials of degree $k$ on $\c^n$. For each $k \geq 0$,
$V_k$ is an irreducible representation with highest weight $(k \geq 0 = \cdots = 0)$ (under the identification of
the dominant weights of $\GL(n, \c)$ with non-increasing sequences of integers
$\lambda = (\lambda_1 \geq \cdots \geq \lambda_n)$).
Let $F(k) = \dim(V_k) = $ number of monomials in $n$ variables and of total degree $k$. One knows that
$F(k) = {k+n-1 \choose n-1}$. Then $\phi(k) = (1/(n-1)!)k^{n-1}$ is the homogenous component of $F$ of highest degree.
For each finite set $A=\{a_1, \ldots, a_s\} \subset \z_{\geq 0}$ let $L_A$ be the space of
polynomials on $\c^n$ whose homogeneous components have degrees $a_1, \ldots, a_s$, i.e.
$\supp(L_A) = A$. Let $A_1, \ldots, A_n \subset \z_{\geq 0}$ be finite subsets and for each $i$, let $f_i \in L_i$ be a
generic polynomial. Then by Corollary \ref{cor-int-index-subspace}
the number of solutions of the system $f_1(x) = \cdots = f_n(x) = 0$ in
$\c^n \setminus \{0\}$ is equal to $n! I \phi (I_1, \ldots, I_n)$ where
for each $i$, $I_i = \Delta(A_i)$ is the interval which is the convex hull of the finite set $A_i$.
For each $k \geq 0$ let $\Delta_k$ be the G-C polytope associated to the dominant weight
$(k \geq 0 = \cdots = 0)$. From the defining equations of G-C polytopes we see that
$\Delta_k = \{ (x_1, \ldots, x_n) \in \r^{n-1} \mid k \geq x_{n-1} \geq \cdots \geq x_1 \geq 0\}$.
Now for a finite subset $A \subset \z_{\geq 0}$, let $I = \Delta(A) = [a, b]$.
Then the Newton polytope $\tilde{\Delta}(A) \subset \r^n$ is defined by
$$\tilde{\Delta}(A) = \{ (k, x_1, \ldots, x_{n-1}) \mid k \geq x_{n-1} \geq \cdots \geq x_1 \geq 0,~ a \geq k \geq b\}.$$
Let $\tilde{\Delta}_i$ denote the Newton polytope of the finite subset $A_i$. Then by
Corollary \ref{cor-int-index-subspace-mixed-vol} the
number of solutions of a generic system $f_1(x) = \cdots = f_n(x) = 0$ in $\c^n \setminus \{0\}$, where
$f_i \in L_i$, is given by $n! V(\tilde{\Delta}_1, \ldots, \tilde{\Delta}_n)$. Here $V$ denotes the mixed volume of
convex bodies in $\r^n$.
\end{Ex}

The last example concerns the degree of equivariant line bundles on partial flag varieties.
\begin{Ex}
Let $I \subset \{1, \ldots, n-1\}$ and let $H=P_I$ be the corresponding parabolic subgroup.
Clearly $H$ is a horospherical subgroup. Put $\dim(G/P_I) = m$.
For a dominant weight $\lambda \in \Lambda_{\sigma_I}^+$ let $\L_\lambda$ be the
corresponding $G$-linearized line bundle on $G/P_I$ and $E_\lambda
= H^0(G/P_I, \L_\lambda) \cong V_\lambda^*$ the corresponding complete linear system.
One sees that $\Delta(E_\lambda) = \lambda$ and $\tilde{\Delta}(E_\lambda) = \Delta_{GC}(\lambda)$.
Let $\lambda_1, \ldots, \lambda_m \in \Lambda_{\sigma_I}^+$ be dominant weights
with the corresponding linear systems $E_1, \ldots, E_m$. Then the intersection number of these
linear systems is given by $$[E_1, \ldots, E_m] = m!V(\Delta_{GC}(\lambda_1), \ldots, \Delta_{GC}(\lambda_m)),$$
where $V$ is the mixed volume of convex bodies in $\r^m$.
\end{Ex}

\vspace{.2cm}
{\small
\noindent Askold G. Khovanskii\\Department of
Mathematics, University of Toronto, Toronto, Canada;
Moscow Independent University; Institute for Systems Analysis, Russian Academy of Sciences.
{\it Email:} {\sf askold@math.utoronto.ca}\\

\noindent Kiumars Kaveh\\Department of Mathematics and Statistics, McMaster University, Hamilton, Canada.
{\it Email:} {\sf kavehk@math.mcmaster.ca}\\
}

\end{document}